\numberwithin{algorithm}{section}
\newcommand\BibTeX{{\rmfamily B\kern-.05em \textsc{i\kern-.025em b}\kern-.08em
T\kern-.1667em\lower.7ex\hbox{E}\kern-.125emX}}
\newtheorem{thm}{Theorem}[section]
\newtheorem{cor}{Corollary}[section]
\numberwithin{equation}{section}
\renewcommand{\theequation}{\thesection.\arabic{equation}}
\def\simgt{\,\hbox{\lower0.6ex\hbox{$>$}\llap{\raise0.3ex\hbox{$\sim$}}}\,}
\def\simlt{\,\hbox{\lower0.6ex\hbox{$<$}\llap{\raise0.3ex\hbox{$\sim$}}}\,}
\def\simgteq{\,\hbox{\lower0.6ex\hbox{$\ge$}\llap{\raise0.6ex\hbox{$\sim$}}}\,}
\def\simlteq{\,\hbox{\lower0.6ex\hbox{$\le$}\llap{\raise0.6ex\hbox{$\sim$}}}\,}
\def\user@resume{resume}
\def\user@intermezzo{intermezzo}
\newcounter{previousequation}
\newcounter{lastsubequation}
\newcounter{savedparentequation}
\renewenvironment{subequations}[1][]{%
      \def\user@decides{#1}%
      \setcounter{previousequation}{\value{equation}}%
      \ifx\user@decides\user@resume
           \setcounter{equation}{\value{savedparentequation}}%
      \else
      \ifx\user@decides\user@intermezzo
           \refstepcounter{equation}%
      \else
           \setcounter{lastsubequation}{0}%
           \refstepcounter{equation}%
      \fi\fi
      \protected@edef\theHparentequation{%
          \@ifundefined {theHequation}\theequation \theHequation}%
      \protected@edef\theparentequation{\theequation}%
      \setcounter{parentequation}{\value{equation}}%
      \ifx\user@decides\user@resume
           \setcounter{equation}{\value{lastsubequation}}%
         \else
           \setcounter{equation}{0}%
      \fi
      \def\theequation  {\theparentequation  \alph{equation}}%
      \def\theHequation {\theHparentequation \alph{equation}}%
      \ignorespaces
}{%
%  \arabic{equation};\arabic{savedparentequation};\arabic{lastsubequation}
  \ifx\user@decides\user@resume
       \setcounter{lastsubequation}{\value{equation}}%
       \setcounter{equation}{\value{previousequation}}%
  \else
  \ifx\user@decides\user@intermezzo
       \setcounter{equation}{\value{parentequation}}%
  \else
       \setcounter{lastsubequation}{\value{equation}}%
       \setcounter{savedparentequation}{\value{parentequation}}%
       \setcounter{equation}{\value{parentequation}}%
  \fi\fi
%  \arabic{equation};\arabic{savedparentequation};\arabic{lastsubequation}
  \ignorespacesafterend
}
\begin{document}

\runningheads{Kareem T. Elgindy}{A Barycentric Shifted Gegenbauer Pseudospectral Method}

\title{Optimal Control of a Parabolic Distributed Parameter System Using a Barycentric Shifted Gegenbauer Pseudospectral Method}

\author{Kareem T. Elgindy\corrauth}

\address{Mathematics Department, Faculty of Science, Assiut University, Assiut 71516, Egypt}

\corraddr{Mathematics Department, Faculty of Science, Assiut University, Assiut 71516, Egypt}

\begin{abstract}
In this paper, we introduce a novel pseudospectral method for the numerical solution of optimal control problems governed by a parabolic distributed parameter system. The infinite-dimensional optimal control problem is reduced into a finite-dimensional nonlinear programming problem through shifted Gegenbauer quadratures constructed using a stable barycentric representation of Lagrange interpolating polynomials and explicit barycentric weights for the shifted Gegenbauer-Gauss (SGG) points. A rigorous error analysis of the method is presented, and a numerical test example is given to show the accuracy and efficiency of the proposed pseudospectral method.
\end{abstract}

\keywords{Barycentric interpolation; Integration matrix; Optimal control; Pseudospectral method; Shifted Gegenbauer polynomial; Shifted Gegenbauer quadrature.}

\maketitle

\vspace{-6pt}

\section{Introduction}
\label{int}
Optimal control theory has attracted much attention since the 1950s after the arrival of digital computers, which provided the impetus for the applications of the branch to many complicated problems; cf. \cite{Elgindy2013b}. One of the primary objectives of this significant branch is to find the control signals that will cause a process to satisfy certain physical constraints while optimizing some performance criterion. Analytical methods can solve only fairly simple problems, therefore much research in this area has been devoted to developing accurate and efficient numerical methods to obtain approximate solutions instead of looking for closed form exact solutions that could be very cumbersome or either impossible to determine.

In this paper, we present a novel and powerful numerical method for the solution of an optimal control problem governed by a parabolic distributed parameter system, which has been recently solved numerically by \cite{Rad2014} using radial basis functions. The present method belongs to the class of pseudospectral methods that were largely developed in the 1970s for solving partial differential equations (PDEs), and impetuously imposed itself strongly as `one of the big three technologies for the numerical solution of PDEs' \cite{Trefethen2000}. The proposed pseudospectral method is a strong tool that exhibits exponential convergence rates, and able to produce accurate approximations using a relatively very small number of collocation points. The central idea in this work is to exploit the well-conditioning of numerical integration operators via recasting the optimal control problem into its integral form. We then approximate the involved integral operators by integration matrices based on shifted Gegenbauer quadratures that can be constructed efficiently using the recently developed Gegenbauer quadratures of \cite{Elgindy2015a}. The novel quadratures are defined based on the stable barycentric representation of Lagrange interpolating polynomials and the explicit barycentric weights for the shifted Gegenbauer-Gauss (SGG) points. The pseudospectral method eventually endeavors to reduce the infinite-dimensional optimal control problem to a finite-dimensional nonlinear programming problem with linear constraints that can be solved easily using standard numerical optimization solvers.

The remainder of this paper is structured as follows: In Sections \ref{sec:ps1} and \ref{sec:tifotocp}, we state the mathematical formulation of the optimal control problem and its integral formulation, respectively. In Section \ref{sec:tdiocp}, we present the novel barycentric shifted Gegenbauer pseudospectral method (BSGPM) for discretizing the integral optimal control problem. Section \ref{sec:err} is devoted for a rigorous error and convergence analysis of the proposed method to verify the spectral decay of the error for increasing number of collocation points. A numerical test example is presented in Section \ref{sec:NEAA1} to assess the accuracy and efficiency of the proposed method followed by some concluding remarks in Section \ref{conc}.
\section{Problem Statement}
\label{sec:ps1}
In this study, we are interested in finding the control function $u:D_{L,t_f}^2 \to \mathbb{R}$, and the corresponding state function $x:D_{L,t_f}^2 \to \mathbb{R}$, that minimize the quadratic cost functional,
\begin{equation}
	J(x,u) = \int_0^{{t_f}} {\int_0^L {\left( {{r_1}\,{x^2}(y,t) + {r_2}\,{u^2}(y,t)} \right)dy\,dt} } ,
\end{equation}
subject to the one-dimensional diffusion equation,
\begin{equation}\label{eq:dyneq1}
	{x_t}(y,t) = {x_{yy}}(y,t) + u(y,t),
\end{equation}
with the initial condition,
\begin{equation}\label{eq:init1}
	x(y,0) = f(y),\quad 0 \le y \le L,
\end{equation}
and the boundary conditions,
\begin{align}
	{x_y}(0,t) &= 0,\quad 0 \le t \le {t_f},\label{eq:bound1}\\
	{x_y}(L,t) &= 0,\quad 0 \le t \le {t_f},\label{eq:bound2}
\end{align}
where $D_{L,t_f}^2 = [0,L] \times [0,{t_f}]$, and $L, t_f, {r_1}, {r_2} \in \mathbb{R}^+$.
\section{The Integral Formulation of the Optimal Control Problem}
\label{sec:tifotocp}
Let,
\begin{align}
{I_{q,\tilde y}^{(y)}(\vartheta (y,t))}&{ = \int_0^{\tilde y} {\int_0^{{\sigma _{q - 1}}} { \ldots \int_0^{{\sigma _2}} {\int_0^{{\sigma _1}} {\vartheta  ({\sigma _0},t)\,d{\sigma _0}d{\sigma _1} \ldots d{\sigma _{q - 2}}d\sigma _{q - 1}} } } } ,}\\
{I_{q,\tilde t}^{(t)}(\vartheta (y,t))}&{ = \int_0^{\tilde t} {\int_0^{{\sigma _{q - 1}}} { \ldots \int_0^{{\sigma _2}} {\int_0^{{\sigma _1}} {\vartheta  (y,{\sigma _0})\,d{\sigma _0}d{\sigma _1} \ldots d{\sigma _{q - 2}}d\sigma _{q - 1}} } } } ,}
\end{align}
denote the $q$-fold integrals of any integrable bivariate function $\vartheta(y,t)$ w.r.t. $y$ and $t$, respectively, for any positive real numbers $\tilde y \in [0, L]$ and $\tilde t \in [0, t_f]$. Using the substitution,
\begin{equation}
	{x_{yy}}(y,t) = \phi (y,t),
\end{equation}
for some unknown function $\phi$, we can recover the unknown state function $x$ and its first-order partial derivative $x_y$ in terms of $\phi$ via successive integration and the boundary condition \eqref{eq:bound1} as follows:
\begin{align}
{x_y}(y,t) = I_{1,y}^{(y)}(\phi (y,t)),\\
x(y,t) = I_{2,y}^{(y)}(\phi (y,t)) + {c_1}(t),\label{eq:xyt1}
\end{align}
where ${c_1}(t)$ is some arbitrary function in $t$. The boundary condition \eqref{eq:bound2} yields,
\begin{equation}\label{eq:bound3}
	I_{1,L}^{(y)}(\phi (y,t)) = 0.
\end{equation}
 Now integrating the Dynamical System Eq. \eqref{eq:dyneq1} with respect to $t$ and using the initial condition \eqref{eq:init1} gives,
\begin{equation}\label{eq:xyt2}
	x(y,t) = I_{1,t}^{(t)}(\phi (y,t) + u(y,t)) + f(y).
\end{equation}
Equating Eqs. \eqref{eq:xyt1} and \eqref{eq:xyt2} yields,
\begin{equation}\label{eq:xyt3}
	I_{2,y}^{(y)}(\phi (y,t)) + {c_1}(t) = I_{1,t}^{(t)}(\phi (y,t) + u(y,t)) + f(y).
\end{equation}
Since Eq. \eqref{eq:xyt3} is satisfied for all $(y,t) \in D_{L,t_f}^2, c_1(t)$ can be determined by setting $y = 0$; so
\begin{equation}
	{c_1}(t) = I_{1,t}^{(t)}(\phi (0,t) + u(0,t)) + f(0),
\end{equation}
and the integral one-dimensional diffusion equation can be written as,
\begin{equation}\label{eq:intdynsyser1}
	I_{2,y}^{(y)}(\phi (y,t)) + I_{1,t}^{(t)}(\phi (0,t) - \phi (y,t) + u(0,t) - u(y,t)) = f(y) - f(0).
\end{equation}
The cost functional can also be written as,
\begin{equation}\label{eq:intquadcostfun1}
	J(\phi ,u) = I_{1,{t_f}}^{(t)}I_{1,L}^{(y)}\left( {{r_1}{{\left( {I_{1,t}^{(t)}(\phi (y,t) + u(y,t)) + f(y)} \right)}^2} + {r_2}\,{u^2}(y,t)} \right).
\end{equation}
Hence, the integral optimal control problem is to find the control function $u:D_{L,t_f}^2 \to \mathbb{R}$, and the corresponding second-order derivative of the state function $\phi:D_{L,t_f}^2 \to \mathbb{R}$, w.r.t. $y$, that minimize the quadratic cost functional \eqref{eq:intquadcostfun1} subject to Eqs. \eqref{eq:intdynsyser1} and \eqref{eq:bound3}.
\section{The BSGPM}
\label{sec:tdiocp}
Let $\mathbb{S}_{l,n}^{(\alpha)} = \{x_{l,n,k}^{(\alpha)}, k = 0, \ldots, n\}$, denote the set of the zeroes (SGG nodes) of the $(n+1)$th-degree shifted Gegenbauer polynomial, $G_{l,n+1}^{(\alpha )}(x)$, defined on the interval $[0, l]$, for any $l \in \mathbb{R}^+, n \in \mathbb{Z}^+, \alpha > -1/2$, and let ${\varpi _{l,n,i}^{(\alpha )}}, i = 0, \ldots, n$, be their corresponding Christoffel numbers. We lay a grid of SGG nodes, $\left(y_{L,N_y,i}^{(\alpha)}, t_{t_f,N_t,j}^{(\alpha)}\right), i = 0, \ldots, N_y; j = 0, \ldots, N_t$, on the rectangular domain $D_{L,t_f}^2$, for some $N_y, N_t \in \mathbb{Z}^+$, sorted ascendingly as $y_{L,{N_y},{N_y} + 1}^{(\alpha )} = 0 < y_{L,{N_y},0}^{(\alpha )} < y_{L,{N_y},1}^{(\alpha )} <  \ldots  < y_{L,{N_y},{N_y}}^{(\alpha )} < y_{L,{N_y},{N_y} + 2}^{(\alpha )} = L;\,t_{{t_f},{N_t},{N_t} + 1}^{(\alpha )} = 0 < t_{{t_f},{N_t},0}^{(\alpha )} < t_{{t_f},{N_t},1}^{(\alpha )} <  \ldots  < t_{{t_f},{N_t},{N_t}}^{(\alpha )} < t_{{t_f},{N_t},{N_t} + 2}^{(\alpha )} = {t_f}$, and approximate the function $\phi$ by interpolation at the internal Gauss nodes. For simplicity, let us denote $\phi \left(y_{L,{N_y},s}^{(\alpha )},t_{t_f ,{N_t},k}^{(\alpha )}\right)$, by ${\phi _{s,k}}\, \forall s,k$. The polynomial interpolant of $\phi$ in two dimensions can be written in Lagrange form as follows \cite{Elgindy2016}:
\begin{equation}\label{sec:theshi2:eq:modlag1}
{P_{{N_y},{N_t}}}\phi (y,t) = \sum\limits_{s = 0}^{{N_y}} {\sum\limits_{k = 0}^{{N_t}} {{\phi _{s,k}}\,{}_{L,t_f }\mathcal{L} _{{N_y},{N_t},s,k}^{(\alpha )}(y,t)} } ,
\end{equation}
where ${}_{L,t_f }\mathcal{L} _{{N_y},{N_t},s,k}^{(\alpha )}(y,t),\,s = 0, \ldots ,{N_y};k = 0, \ldots ,{N_t}$, are the bivariate Lagrange interpolating polynomials defined by,
\begin{align}
_{L,{t_f}}{\cal L}_{{N_y},{N_t},s,k}^{(\alpha )}(y,t) &= {}_L{\cal L}_{B,{N_y},s}^{(\alpha )}(y){\mkern 1mu} \,{}_{{t_f}}{\cal L}_{B,{N_t},k}^{(\alpha )}(t)\,{\mkern 1mu} \forall s,k,\label{sec:theshi2:eq:coeff22}\\
{}_L{\cal L}_{B,{N_y},s}^{(\alpha )}(y) &= \frac{{\xi _{L,{N_y},s}^{(\alpha )}}}{{y - y_{L,{N_y},s}^{(\alpha )}}}/\sum\limits_{j = 0}^{{N_y}} {\frac{{\xi _{L,{N_y},j}^{(\alpha )}}}{{y - y_{L,{N_y},j}^{(\alpha )}}}} \,\forall s,\\
{}_{{t_f}}{\cal L}_{B,{N_t},k}^{(\alpha )}(t) &= \frac{{\xi _{{t_f},{N_t},k}^{(\alpha )}}}{{t - t_{{t_f},{N_t},k}^{(\alpha )}}}/\sum\limits_{j = 0}^{{N_t}} {\frac{{\xi _{{t_f},{N_t},j}^{(\alpha )}}}{{t - t_{{t_f},{N_t},j}^{(\alpha )}}}} \,\forall k,
\end{align}
and $\xi _{l,n,i}^{(\alpha )}, i = 0, \ldots, n$, are the shifted barycentric weights defined by,
\begin{equation}
	\xi _{l,n,i}^{(\alpha )} = 2{( - 1)^i}\sqrt {{4^\alpha }{l^{ - 2(1 + \alpha )}}\left( {l - x_{l,n,i}^{(\alpha )}} \right)\,x_{l,n,i}^{(\alpha )}\,\varpi _{l,n,i}^{(\alpha )}} \,\forall l, n; i = 0, \ldots, n.
\end{equation}
Similarly, we can define the polynomial interpolant of $u$ in two dimensions as follows:
\begin{equation}\label{sec:theshi2:eq:modlag1u1}
{P_{{N_y},{N_t}}}u (y,t) = \sum\limits_{s = 0}^{{N_y}} {\sum\limits_{k = 0}^{{N_t}} {{u _{s,k}}\,{}_{L,t_f }\mathcal{L} _{{N_y},{N_t},s,k}^{(\alpha )}(y,t)} } ,
\end{equation}
The barycentric Lagrange interpolation enjoys several advantages such as being scale-invariant and forward stable for Gauss sets of interpolating points, which makes it very efficient in practice; cf. \cite{Elgindy2015a}.

Let ${\mathbf{P}}_{B,n}^{(q)} = \left( {p_{B,n,i,j}^{(q)}} \right),i,j = 0, \ldots ,n$, be the $q$th-order barycentric Gegenbauer integration matrix, for some $q \in \mathbb{Z}^+$, as defined by \cite{Elgindy2015a}, and denote its $i$th-row vector $\left[ {_lp_{B,n,i,0}^{(q)}{,_l}p_{B,n,i,1}^{(q)}, \ldots {,_l}p_{B,n,i,n}^{(q)}} \right]$ by $_l{\mathbf{P}}_{B,n,i}^{(q)}$. Similar to \cite[Eqs. (4.42)]{Elgindy2016}, we can generate the $q$th-order barycentric shifted Gegenbauer integration matrix (BSGIM), ${}_l{\mathbf{P}}_{B,n}^{(q)} = \left( {{}_lp_{B,n,i,j}^{(q)}} \right), i,j = 0, \ldots ,n$, through the following useful relation:
\begin{equation}\label{eq:shifstandrel1}
	{}_l{\mathbf{P}}_{B,n}^{(q)} = {\left( {\frac{l}{2}} \right)^q}{\mkern 1mu} {\mathbf{P}}_{B,n}^{(q)}.
\end{equation}
By construction, we find that
\begin{subequations}
\begin{align}
I_{1,y_{L,{N_y},i}^{(\alpha )}}^{(y)}\,{P_{{N_y},{N_t}}}\phi (y,t_{{t_f},{N_t},j}^{(\alpha )}) &= \sum\limits_{s = 0}^{{N_y}} {{}_Lp_{B,{N_y},i,s}^{(1)}\,{\phi _{s,j}}} ,\\
I_{1,t_{{t_f},{N_t},j}^{(\alpha )}}^{(t)}\,{P_{{N_y},{N_t}}}\phi (y_{L,{N_y},i}^{(\alpha )},t) &= \sum\limits_{k = 0}^{{N_t}} {{}_{{t_f}}p_{B,{N_t},j,k}^{(1)}\,{\phi _{i,k}}} ,
\end{align}
\end{subequations}
for each $i = 0, \ldots ,{N_y};j = 0, \ldots ,{N_t}$, which is also true for ${P_{{N_y},{N_t}}}u$. Let ${\mathbf{1}} \in {\mathbb{R}^{{N_y} + 1}}$, be the all-ones vector, and ``$\otimes$'' denotes the Kronecker product. The discrete integral one-dimensional diffusion equation can be written at the SGG mesh grid as,
\begin{align}
_L{\mathbf{P}}_{B,{N_y},i}^{(2)}&\left( {\phi \left( {{\bm{y}}_{L,{N_y}}^{(\alpha )},t_{{t_f},{N_t},j}^{(\alpha )}} \right)} \right){ + _{{t_f}}}{\mathbf{P}}_{B,{N_t},j}^{(1)}\left( \phi \left( {y_{L,{N_y},{N_y} + 1}^{(\alpha )},{\bm{t}}_{{t_f},{N_t}}^{(\alpha )}} \right) - \phi \left( {y_{L,{N_y},i}^{(\alpha )},{\bm{t}}_{{t_f},{N_t}}^{(\alpha )}} \right) + u\left( {y_{L,{N_y},{N_y} + 1}^{(\alpha )},{\bm{t}}_{{t_f},{N_t}}^{(\alpha )}} \right)\right.\nonumber\\
 &\left.- u\left( {y_{L,{N_y},i}^{(\alpha )},{\bm{t}}_{{t_f},{N_t}}^{(\alpha )}} \right) \right) = {\mathbf{F}},\label{eq:discintdynsyser1}
\end{align}
for $i = 0, \ldots ,{N_y};\,j = 0, \ldots ,{N_t}$, where $\bm{y}_{L,{N_y}}^{(\alpha )} = {\left[ {y_{L,{N_y},0}^{(\alpha )},y_{L,{N_y},1}^{(\alpha )}, \ldots ,y_{L,{N_y},{N_y}}^{(\alpha )}} \right]^T}, \bm{t}_{{t_f},{N_t}}^{(\alpha )} = {\left[ {t_{{t_f},{N_t},0}^{(\alpha )},t_{{t_f},{N_t},1}^{(\alpha )}, \ldots ,t_{{t_f},{N_t},{N_t}}^{(\alpha )}} \right]^T}$;
\begin{equation}
	{\mathbf{F}} = f\left( {{\bm{y}}_{L,{N_y}}^{(\alpha )}} \right) - f\left( {y_{L,{N_y},{N_y} + 1}^{(\alpha )}} \right) \otimes\, {\mathbf{1}}.
\end{equation}

Let $\mathbf{P}_{B,{N_y},{N_y} + 2}^{(1)} = \left[ {p_{B,{N_y},{N_y} + 2,0}^{(1)},p_{B,{N_y},{N_y} + 2,1}^{(1)}, \ldots ,p_{B,{N_y},{N_y} + 2,{N_y}}^{(1)}} \right]$, be the barycentric Gegenbauer integration vector required for approximating definite integrals over the interval $[-1, 1]$ as described by
\cite[Algorithms 6 and 7]{Elgindy2015a}. Similar to Eq. \eqref{eq:shifstandrel1}, we can easily construct the shifted barycentric Gegenbauer integration vector,
${}_L{\mathbf{P}}_{B,{N_y},{N_y} + 2}^{(1)} = \left[ {}_Lp_{B,{N_y},{N_y} + 2,0}^{(1)}\right.$, $\left.{}_Lp_{B,{N_y},{N_y} + 2,1}^{(1)}, \ldots ,{}_Lp_{B,{N_y},{N_y} + 2,{N_y}}^{(1)} \right]$, by the following formula:
\begin{equation}
	{}_L\mathbf{P}_{B,{N_y},{N_y} + 2}^{(1)} = \frac{L}{2}\, \mathbf{P}_{B,{N_y},{N_y} + 2}^{(1)}.
\end{equation}
Hence, the discrete form of the boundary condition \eqref{eq:bound3} is given by,
\begin{equation}\label{eq:discbound3}
	{}_L\mathbf{P}_{B,{N_y},{N_y} + 2}^{(1)}\,\phi \left(\bm{y}_{L,{N_y}}^{(\alpha )},t_{{t_f},{N_t},j}^{(\alpha )}\right) = 0,\quad j = 0, \ldots .{N_t}.
\end{equation}
The discrete cost functional can also be written as,
\begin{equation}
	{J_{{N_y},{N_t}}} = \sum\limits_{l = 0}^{{N_t}} {_{{t_f}}p_{B,{N_t},{N_t} + 2,l}^{(1)}\,\sum\limits_{k = 0}^{{N_y}} {_Lp_{B,{N_y},{N_y} + 2,k}^{(1)}} } \left( {{r_1}\,{x_{k,l}^2} + {r_2}{\mkern 1mu} {u_{k,l}^2}} \right),
\end{equation}
where,
\begin{align}
u_{k,l} &= u\left( {y_{L,{N_y},k}^{(\alpha )},t_{{t_f},{N_t},l}^{(\alpha )}} \right)\, \forall k,l,\\
	x_{k,l} &= x\left( {y_{L,{N_y},k}^{(\alpha )},t_{{t_f},{N_t},l}^{(\alpha )}} \right) \approx \sum\limits_{s = 0}^{{N_t}} {_{{t_f}}p_{B,{N_t},l,s}^{(1)}\left( {\phi_{k,s} + u_{k,s}} \right)}  + f_k\, \forall k,l;\\
	f_k &= f\left( {y_{L,{N_y},k}^{(\alpha )}} \right)\, \forall k.
\end{align}
Now to put the pointwise representation of the discrete dynamics \eqref{eq:discintdynsyser1} and constraints \eqref{eq:discbound3} into a standard matrix system form, we introduce the mapping $n = \text{index}(i, j): n = i + j\, (N_y + 2)$, and set $L' = {N_y} + {N_t} + {N_y}{N_t}, L'' = {N_y} + {N_t}\left( {{N_y} + 2} \right)$,
\begin{align}
	\left( {{{\phi_{i,j} }}} \right) &= \left( {{{\hat \phi }_n}} \right) = \bm{\hat \phi}  \in {\mathbb{R}^{L'' + 2}};\\
	\left( {{{u_{i,j} }}} \right) &= \left( {{{\hat u }_n}} \right) = \bm{\hat u} \in {\mathbb{R}^{L'' + 2}}.
\end{align}
 We also define the elements of the auxiliary matrices ${\mathbf{A}} \in {\mathbb{R}^{(L'' + 1) \times (L'' + 2)}}$ and ${\mathbf{B}} \in {\mathbb{R}^{(L'' + 1) \times (L'' + 2)}}$, and the column vector $\bm{\tilde b}$ by,
\begin{subequations}\label{eq:formmatcons11}
\begin{align}
&{{(\mathbf{A})}_{{\text{index}}\left( {i,j} \right),{\text{index}}\left( {k,j} \right)}} = _Lp_{B,{N_y},i,k}^{(2)},\quad k = 0, \ldots ,{N_y},k \ne i,\\
&{{(\mathbf{A})}_{{\text{index}}\left( {i,j} \right),{\text{index}}\left( {i,k} \right)}} = { - _{{t_f}}}p_{B,{N_t},j,k}^{(1)},\quad k = 0, \ldots ,{N_t},k \ne j,\\
&{{(\mathbf{A})}_{{\text{index}}\left( {i,j} \right),{\text{index}}\left( {{N_y} + 1,k} \right)}} = _{{t_f}}p_{B,{N_t},j,k}^{(1)},\quad k = 0, \ldots ,{N_t},\\
&{{(\mathbf{A})}_{{\text{index}}\left( {i,j} \right),{\text{index}}\left( {i,j} \right)}} = _Lp_{B,{N_y},i,i}^{(2)}{ - _{{t_f}}}p_{B,{N_t},j,j}^{(1)},\\
&{{(\mathbf{B})}_{{\text{index}}\left( {i,j} \right),{\text{index}}\left( {i,k} \right)}} = { - _{{t_f}}}p_{B,{N_t},j,k}^{(1)},\quad k = 0, \ldots ,{N_t},\\
&{{(\mathbf{B})}_{{\text{index}}\left( {i,j} \right),{\text{index}}\left( {{N_y} + 1,k} \right)}} = _{{t_f}}p_{B,{N_t},j,k}^{(1)},\quad k = 0, \ldots ,{N_t},\\
&{\left({\bm{\tilde b}}\right)_n} = {\left( {\mathbf{F}} \right)_i},
\end{align}
\end{subequations}
for $i = 0, \ldots ,{N_y};\,j = 0, \ldots ,{N_t}$. Therefore, we can write Eqs. \eqref{eq:discintdynsyser1} in matrix form as,
\begin{equation}\label{eq:discintdynsyser2}
	{\mathbf{\bar A}}\,\bm{\hat{\phi}}  + {\mathbf{\bar B}}\,{\bm{\hat u}} = \bm{\bar{b}},
\end{equation}
where,
\begin{align}
\left( {{{(\mathbf{A})}_{i,*}}:(i + 1) \not \equiv 0\; \left( {\bmod \left( {{N_y} + 2} \right)} \right) \wedge \,i \in \{0, \ldots ,L''\}} \right) = {\mathbf{\bar A}} \in {\mathbb{R}^{^{(L' + 1) \times (L'' + 2)}}},\\
\left( {{{(\mathbf{B})}_{i,*}}:(i + 1) \not \equiv 0\; \left( {\bmod \left( {{N_y} + 2} \right)} \right) \wedge \,i \in \{0, \ldots ,L''\}} \right) = {\mathbf{\bar B}} \in {\mathbb{R}^{^{(L' + 1) \times (L'' + 2)}}},\\
\left( {{{\left(\bm{\tilde b}\right)}_{i}}:(i + 1) \not \equiv 0\; \left( {\bmod \left( {{N_y} + 2} \right)} \right) \wedge \,i \in \{0, \ldots ,L''\}} \right) = {\bm{\bar {b}}} \in {\mathbb{R}^{^{(L' + 1)}}},
\end{align}
and the asterisk ``*'' denotes the whole range of column indices. Hence, the global collocation matrix $\mathbf{\bar{H}}$ is simply given by,
\begin{equation}
	\mathbf{\bar{H}} = \left[\mathbf{\bar{A}}, \mathbf{\bar{B}}\right],
\end{equation}
where ``[.,.]'' is the usual horizontal matrix concatenation notation. Thus, Eq. \eqref{eq:discintdynsyser2} can be rewritten as,
\begin{equation}\label{eq:discintdynsyser3}
	{\mathbf{\bar{H}}}\,{\bm{Z}} = {\bm{\bar b}},
\end{equation}
where the solution vector ${\bm{Z}} \in {\mathbb{R}^{2\,L'' + 4}}$ is given by,
\begin{equation}
	{\bm{Z}} = {\text{vec}}\left[ {\bm {\hat{\phi}} ,\,{\bm{\hat u}}} \right],
\end{equation}
and ``vec'' denotes the vectorization of a matrix. Moreover, if we define the elements of the column vector $\bm{{\bar \phi }} \in \mathbb{R}^{L'+1}$ by,
\begin{equation}
	{\bar \phi _l} = {\hat \phi _{l + \left\lfloor {l/\left( {{N_y} + 1} \right)} \right\rfloor }},\quad l = 0, \ldots ,L',
\end{equation}
then Eqs. \eqref{eq:discbound3} can also be written in the following useful matrix form,
\begin{equation}\label{eq:discbound4}
	\left( {{\mathbf{I}_{{N_t} + 1}}{ \otimes\, _L}{\mathbf{P}}_{B,{N_y},{N_y} + 2}^{(1)}} \right)\bm{\bar \phi}  = {\bm{0}} \in {\mathbb{R}^{{N_t} + 1}},
\end{equation}
where ${\mathbf{I}_{{N_t} + 1}}$ is the identity matrix of order $N_t + 1$. We can further combine Eqs. \eqref{eq:discintdynsyser3} and \eqref{eq:discbound4} in a single linear system form. To this end, define the index vector $\Lambda$ by,
\begin{equation}
\Lambda  = \left[ {0(1)L'} \right] + \left\lfloor {\frac{1}{{{N_y} + 1}}\left[ {0(1)L'} \right]} \right\rfloor ,
\end{equation}
where $\left[ {0(1)L'} \right] = [0,1, \ldots ,L']$, and ``$\left\lfloor {.} \right\rfloor$'' denotes the floor function. Moreover, let
\begin{equation}
{\mathbf{\Psi }} \in {\mathbb{R}^{({N_t} + 1) \times (2{\kern 1pt} L'' + 4)}}:\left\{ \begin{array}{l}
	{\left( {\mathbf{\Psi }} \right)_{i,\Lambda }} = {\left( {{I_{{N_t} + 1}}{ \otimes\, _L}P_{B,{N_y},{N_y} + 2}^{(1)}} \right)_{i,*}},\quad i = 0, \ldots ,{N_t},\\
	{\left( {\mathbf{\Psi }} \right)_{i,j}} = 0,\quad i = 0, \ldots ,{N_t};j \notin \Lambda .
	\end{array} \right.
\end{equation}
Hence, Eqs. \eqref{eq:discintdynsyser3} and \eqref{eq:discbound4} can be written as,
\begin{equation}\label{eq:finaldyncon1}
{\mathbf{H}}\,{\bm{Z}} = {\bm{b}},\\
\end{equation}
where,
\begin{align}
	{\mathbf{H}} &= [{\mathbf{\bar H}};{\mathbf{\Psi }}],\\
	{\bm{b}} &= {\text{vec}}\left[{\bm{\bar b}},{\bm{0}}\right],
\end{align}
$\bm{0} \in \mathbb{R}^{N_t+1}$, and ``$[.;.]$'' is the vertical matrix concatenation along columns defined by ``${[{.^T},{.^T}]^T}$''. To write the discrete cost functional in terms of the solution vector $\bm{Z}$, we introduce the mapping $m = \text{index}(k, l): m = k + l\, (N_y + 2)$, and the notation,
\begin{equation}
	{(\bm{v})_{(k)}} = \underbrace {\bm{v} \circ \bm{v} \circ  \ldots  \circ \bm{v}}_{k - {\text{times}}}\, \forall \bm{v} \in \mathbb{R}^l, l \in \mathbb{Z}^+,
\end{equation}
where ``$\circ$'' denotes the Hadamard (entrywise) product. Moreover, let $\bm{{{\check \phi }}} = \left({{\check \phi }_i}\right) \in \mathbb{R}^{L'+1}$ and $\bm{{{\check u }}} = \left({{\check u }_i}\right) \in \mathbb{R}^{L'+1}$:
\begin{align}
	{{\check \phi }_i} &= {({\bm{Z})}_{\left\lfloor {\frac{i}{{{N_t} + 1}}} \right\rfloor  + \left( {{N_y} + 2} \right) \cdot \left( {1 - {\delta _{\frac{i}{{{N_t} + 1}},\left\lfloor {\frac{i}{{{N_t} + 1}}} \right\rfloor }}} \right) \cdot i \equiv 0\; \left({\bmod \left( {{N_t} + 1} \right)} \right)}},\quad i = 0, \ldots ,L',\\
	{{\check u}_i} &= {({\bm{Z})}_{L'' + \left\lfloor {\frac{i}{{{N_t} + 1}}} \right\rfloor  + \left( {{N_y} + 2} \right) \cdot \left( {1 - {\delta _{\frac{i}{{{N_t} + 1}},\left\lfloor {\frac{i}{{{N_t} + 1}}} \right\rfloor }}} \right) \cdot i \equiv 0\; \left({\bmod \left( {{N_t} + 1} \right)} \right) + 2}},\quad i = 0, \ldots ,L',
\end{align}
then the sought discrete cost functional can be written as,
\begin{equation}\label{eq:finalcostfun1}
	{J_{{N_y},{N_t}}}{ = _{{t_f}}}{\mathbf{P}}_{B,{N_t},{N_t} + 2}^{(1)}\left( {{I_{{N_t} + 1}}{ \otimes\, _L}{\mathbf{P}}_{B,{N_y},{N_y} + 2}^{(1)}} \right)\left( {{r_1}\,{{\left( {{\bm{\bar x}}} \right)}_{(2)}} + {r_2}\,{{\left( {{\bm{\bar u}}} \right)}_{(2)}}} \right),
\end{equation}
where,
\begin{align}
	{\left( {{\bm{\bar u}}} \right)_q} &= {{\hat u}_{q + \left\lfloor {\frac{q}{{\left( {{N_y} + 1} \right)}}} \right\rfloor }} = {({\bm{Z})}_{L'' + q + \left\lfloor {\frac{q}{{\left( {{N_y} + 1} \right)}}} \right\rfloor  + 2}},\quad \;q = 0, \ldots ,L',\\
	{\left( {{\bm{\bar x}}} \right)_q} &= {{\hat x}_{q + \left\lfloor {\frac{q}{{\left( {{N_y} + 1} \right)}}} \right\rfloor }},\quad q = 0, \ldots ,L',\\
	{{\hat x}_m} &= {x_{k,l}} = {\left[ {{I_{{N_y} + 1}}{ \otimes\, _{{t_f}}}{\mathbf{P}}_{B,{N_t},l}^{(1)}} \right]_{k,*}}\left( {\bm{\check \phi}  + \bm{\check u}} \right) + {f_k},\quad m = 0, \ldots ,L'' + 1.
\end{align}
Hence, the optimal control problem has been reduced to a nonlinear programming problem, in which we seek the minimization of the objective function ${J_{{N_y},{N_t}}}$ defined by \eqref{eq:finalcostfun1} subject to the linear constraints given by \eqref{eq:finaldyncon1}. Solving for $\bm{Z}$ yields the values of the functions $\phi$ and $u$ at the solution nodes $\left( {y_{L,{N_y},i}^{(\alpha )},t_{{t_f},{N_t},j}^{(\alpha )}} \right),i = 0, \ldots ,{N_y} + 1;j = 0, \ldots ,{N_t}$. To recover the state function $x$ at those nodes, we can use Eq. \eqref{eq:xyt2} to obtain,
\begin{equation}
	{x_{i,j}} \approx\, _{{t_f}}{\mathbf{P}}_{B,{N_t},j}^{(1)}\left( {\phi \left( {y_{L,{N_y},i}^{(\alpha )},\bm{t}_{{t_f},{N_t}}^{(\alpha )}} \right) + u\left( {y_{L,{N_y},i}^{(\alpha )},\bm{t}_{{t_f},{N_t}}^{(\alpha )}} \right)} \right) + {f_i}\,\forall i,j.
\end{equation}
Furthermore, we can generate the approximation of the state profile on $D_{L,t_f}^2$ using the polynomial interpolant of $x$ given by,
\begin{equation}
{P_{{N_y},{N_t}}}x(y,t) = \sum\limits_{s = 0}^{{N_y}} {\sum\limits_{k = 0}^{{N_t}} {{x_{s,k}}\,{}_{L,t_f }\mathcal{L} _{{N_y},{N_t},s,k}^{(\alpha )}(y,t)} }.
\end{equation}
%We shall refer to our novel numerical scheme by the barycentric shifted Gegenbauer pseudospectral method (BSGPM).
%
\section{Error Analysis of the BSGPM}
\label{sec:err}
Let
\begin{equation}
	I_{q,\tilde x}^{(x)}(f(x)) = \int_0^{\tilde x} {\int_0^{{\sigma _{q - 1}}} { \ldots \int_0^{{\sigma _2}} {\int_0^{{\sigma _1}} {f({\sigma _0}){\mkern 1mu} d{\sigma _0}d{\sigma _1} \ldots d{\sigma _{q - 2}}d{\sigma _{q - 1}}} } } } ,
\end{equation}
be the $q$-fold integral of any integrable single-variable function $f(x)$, for some positive real number $\tilde x \in [0, l], l \in \mathbb{R}^+$, and denote $\mathbb{Z}^+ \cup \{0\}$ by $\mathbb{Z}_0^+$. The following two theorems highlight the truncation error and the error bounds of the barycentric shifted Gegenbauer quadrature (BSGQ) associated with the BSGIM, $_l{\mathbf{P}}_{B,n}^{(1)}\, \forall n \in \mathbb{Z}^+$. The proof of both theorems follow that of \cite[Theorems 4.1 \& 4.3]{Elgindy2016}.
\begin{thm}\label{sec:err:thm1}
Let $f(x) \in C^{n + 1}[0, l]$, be interpolated by the shifted Gegenbauer polynomials at the SGG nodes, $x_{l,n,i}^{(\alpha )} \in \mathbb{S}_{l,n}^{(\alpha )},\,i = 0, \ldots ,n$, then there exist some numbers $\zeta _{l,n,i}^{(\alpha )} = \zeta \left(x_{l,n,i}^{(\alpha )}\right) \in (0,l),\,i = 0, \ldots ,n$, such that,
\begin{equation}\label{sec:err:eq:squadki1}
I_{1,x_{l,n,i}^{(\alpha )}}^{(x)}(f(x)) =\, _l{\mathbf{P}}_{B,n,i}^{(1)}\;{\bm{f}} + E_{1,l,n}^{(\alpha )}\left( {x_{l,n,i}^{(\alpha )},\zeta _{l,n,i}^{(\alpha )}} \right),
\end{equation}
where ${\bm{f}} = f\left( {{\mathbf{x}}_{l,n}^{(\alpha )}} \right),\;{\mathbf{x}}_{l,n}^{(\alpha )} = {\left[ {x_{l,n,0}^{(\alpha )},x_{l,n,1}^{(\alpha )}, \ldots ,x_{l,n,n}^{(\alpha )}} \right]^T}$,
\begin{equation}\label{sec1:eq:errorkimohat}
%E_{1,l,n}^{(\alpha )}\left( {x_{l,n,i}^{(\alpha )},\zeta _{l,n,i}^{(\alpha )}} \right) = {\left( {\frac{l}{2}} \right)^{n + 1}}\frac{{{f^{(n + 1)}}\left(\zeta _{l,n,i}^{(\alpha )}\right)}}{{(n + 1)!{\mkern 1mu} K_{n + 1}^{(\alpha )}}}I_{1,x_{l,n,i}^{(\alpha )}}^{(x)}\left( {G_{l,n + 1}^{(\alpha )}(x)} \right),
E_{1,l,n}^{(\alpha )}\left( {x_{l,n,i}^{(\alpha )},\zeta _{l,n,i}^{(\alpha )}} \right) = \frac{{{f^{(n + 1)}}\left( {\zeta _{l,n,i}^{(\alpha )}} \right)}}{{(n + 1)!\,K_{l,n + 1}^{(\alpha )}}}I_{1,x_{l,n,i}^{(\alpha )}}^{(x)}\left( {G_{l,n + 1}^{(\alpha )}(x)} \right),
\end{equation}
and $K_{l,n}^{(\alpha )}$ is the leading coefficient of the shifted Gegenbauer polynomial ${G_{l,n}^{(\alpha )}(x)}$.
\end{thm}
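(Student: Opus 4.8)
The plan is to follow the classical template for interpolatory quadrature error, adapted to the single integration operator $I_{1,x_{l,n,i}^{(\alpha )}}^{(x)}$. First I would split $f$ into its degree-$n$ shifted Gegenbauer interpolant $P_n f$ at the SGG nodes plus the interpolation remainder. Since $f \in C^{n+1}[0,l]$ and the $n+1$ nodes $x_{l,n,0}^{(\alpha )}, \ldots, x_{l,n,n}^{(\alpha )}$ are distinct, the classical remainder formula for polynomial interpolation gives, for each $x \in [0,l]$,
\begin{equation}
f(x) - P_n f(x) = \frac{f^{(n+1)}(\zeta(x))}{(n+1)!}\,\omega(x), \qquad \omega(x) = \prod_{j=0}^{n}\left(x - x_{l,n,j}^{(\alpha )}\right),
\end{equation}
for some $\zeta(x) \in (0,l)$. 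The first key observation is that $\omega$ is the monic polynomial whose roots are exactly the SGG nodes, i.e. the zeros of $G_{l,n+1}^{(\alpha )}$; hence $\omega(x) = G_{l,n+1}^{(\alpha )}(x)/K_{l,n+1}^{(\alpha )}$, where $K_{l,n+1}^{(\alpha )}$ is the leading coefficient of $G_{l,n+1}^{(\alpha )}$.

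Next I would apply the operator $I_{1,x_{l,n,i}^{(\alpha )}}^{(x)}$ to both sides. Because $P_n f$ is a polynomial of degree at most $n$ that interpolates $f$ at the SGG nodes, the exactness of the first-order BSGIM established by construction yields $I_{1,x_{l,n,i}^{(\alpha )}}^{(x)}(P_n f) = {}_l{\mathbf{P}}_{B,n,i}^{(1)}\,\bm{f}$, which is precisely the quadrature term in \eqref{sec:err:eq:squadki1}. What remains is the integrated remainder,
\begin{equation}
I_{1,x_{l,n,i}^{(\alpha )}}^{(x)}\bigl(f - P_n f\bigr) = \frac{1}{(n+1)!\,K_{l,n+1}^{(\alpha )}} \int_0^{x_{l,n,i}^{(\alpha )}} f^{(n+1)}(\zeta(x))\, G_{l,n+1}^{(\alpha )}(x)\, dx,
\end{equation}
so the target identity reduces to extracting a single sample $f^{(n+1)}(\zeta_{l,n,i}^{(\alpha )})$ from this integral while retaining the factor $I_{1,x_{l,n,i}^{(\alpha )}}^{(x)}(G_{l,n+1}^{(\alpha )})$.

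The final step is to pull $f^{(n+1)}$ outside the integral, replacing the $x$-dependent $\zeta(x)$ by a fixed $\zeta_{l,n,i}^{(\alpha )} \in (0,l)$, which then matches $E_{1,l,n}^{(\alpha )}$ exactly. To make the integrand a bona fide continuous function I would first recast the remainder through the divided difference $f[x_{l,n,0}^{(\alpha )}, \ldots, x_{l,n,n}^{(\alpha )}, x]$, which is continuous in $x$ on $[0,l]$ and equals $f^{(n+1)}(\zeta(x))/(n+1)!$. I expect the main obstacle to lie exactly here: $G_{l,n+1}^{(\alpha )}$ changes sign at each interior SGG node lying in $[0, x_{l,n,i}^{(\alpha )}]$, so the weight in the integral is not of one sign and the elementary weighted mean value theorem for integrals does not apply verbatim. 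Resolving this requires the finer argument of \cite[Theorems 4.1 \& 4.3]{Elgindy2016}, combining the sign pattern of $G_{l,n+1}^{(\alpha )}$ with the continuity and range of the divided-difference function to confirm that the resulting weighted average remains in the range of $f^{(n+1)}$ on $(0,l)$; the intermediate value theorem then supplies the required $\zeta_{l,n,i}^{(\alpha )}$ and completes the identity.
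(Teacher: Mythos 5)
Your outline matches the route the paper intends: the paper gives no proof of Theorem \ref{sec:err:thm1} at all, deferring entirely to \cite[Theorems 4.1 \& 4.3]{Elgindy2016}, and the steps you lay out (Lagrange remainder, identification of the nodal polynomial $\omega$ with $G_{l,n+1}^{(\alpha)}/K_{l,n+1}^{(\alpha)}$, exactness of ${}_l{\mathbf{P}}_{B,n,i}^{(1)}$ on the degree-$n$ interpolant, then a mean-value extraction of $f^{(n+1)}$) are exactly the standard derivation behind that reference. You have also correctly isolated the only step that carries real content: replacing the $x$-dependent $\zeta(x)$ in $\int_0^{x_{l,n,i}^{(\alpha)}} f^{(n+1)}(\zeta(x))\,\omega(x)\,dx$ by a single point $\zeta_{l,n,i}^{(\alpha)}$.

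The difficulty is that your plan does not close that step, and it is a genuine gap rather than a technicality. The weighted mean value theorem requires the weight to keep one sign, and on $[0,x_{l,n,i}^{(\alpha)}]$ the polynomial $\omega$ changes sign at every node $x_{l,n,j}^{(\alpha)}$ lying below $x_{l,n,i}^{(\alpha)}$; so the hypothesis fails for every $i$ except the one indexing the smallest node. Recasting the remainder through the divided difference $f[x_{l,n,0}^{(\alpha)},\ldots,x_{l,n,n}^{(\alpha)},x]$ buys continuity of the integrand, but continuity is not the obstruction: with a sign-changing weight the ratio of $\int f^{(n+1)}(\zeta(x))\,\omega(x)\,dx$ to $\int \omega(x)\,dx$ need not lie in the range of $f^{(n+1)}$, and $I_{1,x_{l,n,i}^{(\alpha)}}^{(x)}(G_{l,n+1}^{(\alpha)})$ could in principle vanish while the error does not, in which case no $\zeta_{l,n,i}^{(\alpha)}$ satisfying \eqref{sec1:eq:errorkimohat} exists. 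Your closing sentence (``combining the sign pattern \ldots\ with the intermediate value theorem'') is a placeholder for the argument, not the argument itself. What survives rigorously from your computation is the one-sided estimate $|E_{1,l,n}^{(\alpha)}| \le \|f^{(n+1)}\|_{L^\infty[0,l]}\,\bigl((n+1)!\,|K_{l,n+1}^{(\alpha)}|\bigr)^{-1} I_{1,x_{l,n,i}^{(\alpha)}}^{(x)}\bigl(|G_{l,n+1}^{(\alpha)}|\bigr)$, which is in fact all that Theorem \ref{sec1:thm:krooma1} and the downstream error bounds consume; to defend the equality form as stated you would need either to restrict to indices $i$ for which no node lies in $(0,x_{l,n,i}^{(\alpha)})$, or to supply a kernel-specific argument that neither your proposal nor the paper contains.
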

\begin{thm}\label{sec1:thm:krooma1}
Assume that $f(x) \in C^{n+1}[0, l]$, and
\[{\left\| {{f^{(n + 1)}}} \right\|_{{L^\infty }[0,l]}} = \mathop {{\sup}}\limits_{0 \le x \le l} \left| {f(x)} \right| \le A \in {\mathbb{R}^ + },\]
for some number $n \in \mathbb{Z}_0^+$. Moreover, let $I_{1,x_{l,n,i}^{(\alpha )}}^{(x)}(f(x))$, be approximated by the BSGQ, for each integration node ${x_{l,n,i}^{(\alpha )}} \in \mathbb{S}_{l,n}^{(\alpha )},\,i = 0, \ldots ,n$. Then there exist some positive constants $D_1^{(\alpha)}$ and $D_2^{(\alpha)}$, independent of $n$ such that the truncation error of the BSGQ, $E_{1,l,n}^{(\alpha )}$, is bounded by the following inequalities:
\begin{equation}
	\left| {E_{1,l,n}^{(\alpha)}} \right| \le \frac{{A{2^{ - 2n - 1}}\Gamma \left( {\alpha  + 1} \right){x_i}{l^{n + 1}}\Gamma \left( {n + 2\alpha  + 1} \right)}}{{\Gamma \left( {2\alpha  + 1} \right)\Gamma \left( {n + 2} \right)\Gamma \left( {n + \alpha  + 1} \right)}}\left( {\left\{ \begin{array}{l}
	1,\quad n \ge 0 \wedge \alpha  \ge 0,\\
	\frac{{\left( {n + 1} \right)!\Gamma \left( {2\alpha } \right)}}{{\Gamma \left( {n + 2\alpha  + 1} \right)}}\,\left| {\left( {\begin{array}{*{20}{c}}
{\frac{{n + 1}}{2} + \alpha  - 1}\\
{\frac{{n + 1}}{2}}
\end{array}} \right)} \right|,\quad \frac{{n + 1}}{2} \in \mathbb{Z}^+  \wedge  - \frac{1}{2} < \alpha  < 0
	\end{array} \right.} \right),
\end{equation}
\begin{equation}
	\left| {E_{1,l,n}^{(\alpha)}} \right| < \frac{{A{2^{ - 2n - 1}}\Gamma \left( \alpha  \right)\left| \alpha  \right|{\mkern 1mu} {x_i}{l^{n + 1}}}}{{\sqrt {\left( {n + 1} \right)\left( {2\alpha  + n + 1} \right)} {\mkern 1mu} \Gamma \left( {n + \alpha  + 1} \right)}}\,\left| {\left( {\begin{array}{*{20}{c}}
{\frac{n}{2} + \alpha }\\
{\frac{n}{2}}
\end{array}} \right)} \right|,\quad \frac{n}{2} \in \mathbb{Z}_0^ + \wedge  - \frac{1}{2} < \alpha  < 0,
\end{equation}
\begin{subequations}\label{sec1:ineq:errorkimo1}
	\begin{empheq}[left={\left| {E_{1,l,n}^{(\alpha)}} \right| \le}\empheqlbrace]{align}
	&B_1^{(\alpha)}\frac{{{e^n}\,{l^{n + 1}}\,{x_i}}}{{{2^{2n + 1}}\,{n^{n + 3/2 - \alpha}}}},\quad \alpha \ge 0 \wedge n \gg 1,\\
	&B_2^{(\alpha)}\frac{{{e^n}\,{l^{n + 1}}\,{x_i}}}{{{2^{2n + 1}}\,{n^{n + 3/2}}}},\quad - \frac{1}{2} < \alpha < 0  \wedge n \gg 1,
	\end{empheq}
\end{subequations}
for all $i = 0, \ldots, n$, where $B_1^{(\alpha)} = A D_1^{(\alpha)}$, and $B_2^{(\alpha)} = B_1^{(\alpha)} D_2^{(\alpha)}$.
\end{thm}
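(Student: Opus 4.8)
The plan is to begin from the exact truncation-error identity supplied by Theorem~\ref{sec:err:thm1},
\[
E_{1,l,n}^{(\alpha)} = \frac{f^{(n+1)}(\zeta_{l,n,i}^{(\alpha)})}{(n+1)!\,K_{l,n+1}^{(\alpha)}}\,I_{1,x_{l,n,i}^{(\alpha)}}^{(x)}\!\left(G_{l,n+1}^{(\alpha)}(x)\right),
\]
and to bound its three factors in turn. First I would replace $\bigl|f^{(n+1)}(\zeta_{l,n,i}^{(\alpha)})\bigr|$ by the uniform bound $A$. Next I would evaluate the leading coefficient $K_{l,n+1}^{(\alpha)}$ in closed form: writing the shifted polynomial as $G_{l,n+1}^{(\alpha)}(x)=C_{n+1}^{(\alpha)}(2x/l-1)$ in terms of the standard Gegenbauer polynomial $C_{n+1}^{(\alpha)}$, whose leading coefficient is $2^{n+1}\Gamma(n+\alpha+1)/\bigl((n+1)!\,\Gamma(\alpha)\bigr)$, and multiplying by the Jacobian factor $(2/l)^{n+1}$ gives $K_{l,n+1}^{(\alpha)}=4^{n+1}\Gamma(n+\alpha+1)/\bigl((n+1)!\,\Gamma(\alpha)\,l^{n+1}\bigr)$.

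The decisive third factor is the single integral of the Gegenbauer polynomial, which I would estimate by the elementary inequality
\[
\bigl|I_{1,x_{l,n,i}^{(\alpha)}}^{(x)}(G_{l,n+1}^{(\alpha)})\bigr| \le x_{l,n,i}^{(\alpha)}\,\max_{0\le x\le l}\bigl|G_{l,n+1}^{(\alpha)}(x)\bigr| = x_{l,n,i}^{(\alpha)}\,\max_{-1\le t\le 1}\bigl|C_{n+1}^{(\alpha)}(t)\bigr|,
\]
the affine change of variable preserving the maximum; this is the step responsible for the factor $x_i$ in every displayed estimate, and it reduces the whole problem to locating $\max_{[-1,1]}|C_{n+1}^{(\alpha)}|$. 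Here I would invoke the classical extremal theory of Gegenbauer polynomials. For $\alpha\ge 0$ the maximum sits at the endpoint, $C_{n+1}^{(\alpha)}(1)=\Gamma(n+2\alpha+1)/\bigl((n+1)!\,\Gamma(2\alpha)\bigr)$; substituting this together with $K_{l,n+1}^{(\alpha)}$ and collapsing the Gamma quotients via $\Gamma(\alpha+1)=\alpha\Gamma(\alpha)$ and $\Gamma(2\alpha+1)=2\alpha\Gamma(2\alpha)$ yields the first displayed bound with constant $1$. For $-1/2<\alpha<0$ with $n+1$ even (the case $(n+1)/2\in\mathbb{Z}^+$) the maximum migrates to the origin, where $\bigl|C_{n+1}^{(\alpha)}(0)\bigr|=\bigl|\binom{(n+1)/2+\alpha-1}{(n+1)/2}\bigr|$; dividing this by the endpoint value produces exactly the correction factor $\tfrac{(n+1)!\,\Gamma(2\alpha)}{\Gamma(n+2\alpha+1)}\bigl|\binom{(n+1)/2+\alpha-1}{(n+1)/2}\bigr|$ of the second case.

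For $-1/2<\alpha<0$ with $n+1$ odd (the regime $n/2\in\mathbb{Z}_0^+$), the maximum of $|C_{n+1}^{(\alpha)}|$ is attained at an interior critical point rather than at $0$ or $\pm 1$, and I expect this to be the main obstacle. To obtain the second displayed inequality I would appeal to Szeg\H{o}'s sharp estimate for the largest extremum of an odd-degree Gegenbauer polynomial on $[-1,1]$, which is what introduces the factor $1/\sqrt{(n+1)(2\alpha+n+1)}$ and forces the strict inequality; combining it with $K_{l,n+1}^{(\alpha)}$ and simplifying the resulting Gamma and binomial factors gives the stated form. Finally, for the asymptotic estimates~\eqref{sec1:ineq:errorkimo1} I would apply Stirling's formula to the Gamma functions in the two finite-$n$ bounds: for $\alpha\ge 0$ the ratio $\Gamma(n+2\alpha+1)/\bigl(\Gamma(n+2)\Gamma(n+\alpha+1)\bigr)$ behaves like $e^{n}n^{-(n+3/2-\alpha)}/\sqrt{2\pi}$, which reproduces $e^{n}l^{n+1}x_i/\bigl(2^{2n+1}n^{n+3/2-\alpha}\bigr)$, while for $-1/2<\alpha<0$ the analogous reduction yields $e^{n}l^{n+1}x_i/\bigl(2^{2n+1}n^{n+3/2}\bigr)$. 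All remaining $n$-independent, $\alpha$-dependent factors are absorbed into the constants $D_1^{(\alpha)}$ and $D_2^{(\alpha)}$, so that $B_1^{(\alpha)}=A D_1^{(\alpha)}$ and $B_2^{(\alpha)}=B_1^{(\alpha)}D_2^{(\alpha)}$; verifying that the discarded Stirling remainders stay bounded uniformly in $n$ is the other place where some care is required.
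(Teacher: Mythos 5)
Your proposal is correct and follows essentially the same route as the paper, which simply defers to the argument of Theorems 4.1 and 4.3 of Elgindy (2016a): start from the exact remainder of Theorem \ref{sec:err:thm1}, bound $|f^{(n+1)}|$ by $A$, note that $G_{l,n+1}^{(\alpha)}/K_{l,n+1}^{(\alpha)}$ is the monic polynomial so the normalization cancels, bound the integral by $x_i$ times the sup of the (shifted) Gegenbauer polynomial split according to the sign of $\alpha$ and the parity of $n+1$ via Szeg\H{o}'s extremal results, and finish with Stirling. Your constants check out as well; e.g., in the case $\alpha\ge 0$ the identity $\Gamma(\alpha)/\Gamma(2\alpha)=2\,\Gamma(\alpha+1)/\Gamma(2\alpha+1)$ converts your $4^{-(n+1)}$ into the stated $2^{-2n-1}$ prefactor exactly.
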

Theorems \ref{sec:err:thm1} and \ref{sec1:thm:krooma1} show that the truncation error associated with the BSGQ decays exponentially fast for increasing values of $n$ with $\left| {E_{1,l,n}^{(0)}} \right| < \left| {E_{1,l,n}^{(\alpha)}} \right|\, \forall \alpha \neq 0$, as $n \to \infty$.

The following two theorems further generalize Theorems \ref{sec:err:thm1} and \ref{sec1:thm:krooma1} for higher-order BSGQs.
\begin{thm}\label{sec:err:thm3}
Let $f(x) \in C^{n + 1}[0, l]$, be interpolated by the shifted Gegenbauer polynomials at the SGG nodes, $x_{l,n,i}^{(\alpha )} \in \mathbb{S}_{l,n}^{(\alpha )},\,i = 0, \ldots ,n$, then there exist some numbers $\zeta _{l,n,i}^{(\alpha )} = \zeta \left(x_{l,n,i}^{(\alpha )}\right) \in (0,l),\,i = 0, \ldots ,n$, such that,
\begin{align}
I_{q,x_{l,n,i}^{(\alpha )}}^{(x)}(f(x)) &=\, _l{\mathbf{P}}_{B,n,i}^{(q)}\;{\bm{f}} + E_{q,l,n}^{(\alpha )}\left( {x_{l,n,i}^{(\alpha )},\zeta _{l,n,i}^{(\alpha )}} \right),\label{sec:err:eq:squadki13}\\
%E_{1,l,n}^{(\alpha )}\left( {x_{l,n,i}^{(\alpha )},\zeta _{l,n,i}^{(\alpha )}} \right) = {\left( {\frac{l}{2}} \right)^{n + 1}}\frac{{{f^{(n + 1)}}\left(\zeta _{l,n,i}^{(\alpha )}\right)}}{{(n + 1)!{\mkern 1mu} K_{n + 1}^{(\alpha )}}}I_{1,x_{l,n,i}^{(\alpha )}}^{(x)}\left( {G_{l,n + 1}^{(\alpha )}(x)} \right),
E_{q,l,n}^{(\alpha )}\left( {x_{l,n,i}^{(\alpha )},\zeta _{l,n,i}^{(\alpha )}} \right) &= \frac{{1}}{{(q - 1)!\,(n + 1)!{\mkern 1mu} K_{l,n + 1}^{(\alpha )}}}I_{1,x_{l,n,i}^{(\alpha )}}^{(x)}\left( {G_{l,n + 1}^{(\alpha )}(x)} \right){\left[ {\frac{{{d^{n + 1}}}}{{d{x^{n + 1}}}}\left( {{{\left( {x_{l,n,i}^{(\alpha )} - x} \right)}^{q - 1}}f(x)} \right)} \right]_{x = \zeta _{l,n,i}^{(\alpha )}}}.\label{sec1:eq:errorkimohat3}
\end{align}
\end{thm}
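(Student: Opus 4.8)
The plan is to collapse the $q$-fold integral into a single integral by the Cauchy formula for repeated integration and then apply Theorem~\ref{sec:err:thm1} to the resulting first-order quadrature. Fix a node $x_{l,n,i}^{(\alpha)}$ and set $g_i(x) = \left(x_{l,n,i}^{(\alpha)}-x\right)^{q-1}f(x)$. First I would record
\begin{equation}
I_{q,x_{l,n,i}^{(\alpha)}}^{(x)}(f(x)) = \frac{1}{(q-1)!}\int_0^{x_{l,n,i}^{(\alpha)}}\left(x_{l,n,i}^{(\alpha)}-x\right)^{q-1}f(x)\,dx = \frac{1}{(q-1)!}\,I_{1,x_{l,n,i}^{(\alpha)}}^{(x)}\left(g_i(x)\right),
\end{equation}
which transfers the entire $q$-dependence onto a single integrand while leaving the integration node untouched.

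Since $f\in C^{n+1}[0,l]$ and the polynomial factor is smooth, $g_i\in C^{n+1}[0,l]$, so Theorem~\ref{sec:err:thm1} applies verbatim to $g_i$ at the node $x_{l,n,i}^{(\alpha)}$ and yields
\begin{equation}
I_{1,x_{l,n,i}^{(\alpha)}}^{(x)}(g_i(x)) = {}_l\mathbf{P}_{B,n,i}^{(1)}\,\bm{g}_i + \frac{g_i^{(n+1)}\!\left(\zeta_{l,n,i}^{(\alpha)}\right)}{(n+1)!\,K_{l,n+1}^{(\alpha)}}\,I_{1,x_{l,n,i}^{(\alpha)}}^{(x)}\left(G_{l,n+1}^{(\alpha)}(x)\right),
\end{equation}
for some $\zeta_{l,n,i}^{(\alpha)}\in(0,l)$, with $\bm{g}_i = g_i\!\left(\mathbf{x}_{l,n}^{(\alpha)}\right)$. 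Dividing by $(q-1)!$ and writing $g_i^{(n+1)}(\zeta)=\left[\frac{d^{n+1}}{dx^{n+1}}\left(\left(x_{l,n,i}^{(\alpha)}-x\right)^{q-1}f(x)\right)\right]_{x=\zeta}$ reproduces the error term $E_{q,l,n}^{(\alpha)}$ exactly as in \eqref{sec1:eq:errorkimohat3}.

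It then remains to match the quadrature part $\frac{1}{(q-1)!}\,{}_l\mathbf{P}_{B,n,i}^{(1)}\,\bm{g}_i$ with ${}_l\mathbf{P}_{B,n,i}^{(q)}\,\bm{f}$. Because the $j$th entry of $\bm{g}_i$ is $\left(x_{l,n,i}^{(\alpha)}-x_{l,n,j}^{(\alpha)}\right)^{q-1}f\!\left(x_{l,n,j}^{(\alpha)}\right)$, this is equivalent to the entrywise relation
\begin{equation}
{}_lp_{B,n,i,j}^{(q)} = \frac{\left(x_{l,n,i}^{(\alpha)}-x_{l,n,j}^{(\alpha)}\right)^{q-1}}{(q-1)!}\,{}_lp_{B,n,i,j}^{(1)},\quad i,j=0,\ldots,n,
\end{equation}
which is exactly the construction of the $q$th-order barycentric shifted Gegenbauer integration matrix given in \cite{Elgindy2015a}. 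Substituting this identity then delivers \eqref{sec:err:eq:squadki13}.

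I expect the only genuine obstacle to be the logical dependence on the definition of ${}_l\mathbf{P}_{B,n,i}^{(q)}$: one must confirm that the higher-order matrix of \cite{Elgindy2015a} is the one generated by integrating the interpolant of the Cauchy-collapsed integrand $g_i$ (so that the entrywise relation above holds identically), rather than the matrix obtained by integrating the degree-$n$ interpolant of $f$ itself $q$ times; the two agree only for polynomials of degree $\le n-q+1$, and only the former is consistent with the error formula \eqref{sec1:eq:errorkimohat3}. Once that identification is fixed, the Cauchy formula, the $C^{n+1}$ regularity of $g_i$, and the weighted mean-value step (already absorbed into Theorem~\ref{sec:err:thm1}) make the remaining argument routine, following \cite[Theorem~4.3]{Elgindy2016} as noted.
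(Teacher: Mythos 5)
Your proposal is correct and follows essentially the same route as the paper: collapse the $q$-fold integral via Cauchy's formula for repeated integration, apply Theorem~\ref{sec:err:thm1} to $\left(x_{l,n,i}^{(\alpha)}-x\right)^{q-1}f(x)$, and absorb the factor $\left(x_{l,n,i}^{(\alpha)}-x_{l,n,j}^{(\alpha)}\right)^{q-1}/(q-1)!$ into the first-order quadrature weights to recover $_l\mathbf{P}_{B,n,i}^{(q)}$ (the paper writes this last step as a Hadamard-product identity rather than entrywise, which is the same thing). The definitional caveat you raise about $_l\mathbf{P}_{B,n}^{(q)}$ is the very identity the paper invokes in its closing display, so your argument matches the paper's in substance.
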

\begin{proof}
Using the above notation, we can write Cauchy's formula for repeated integration in the following simple form,
\begin{equation}
	I_{q,x_{l,n,i}^{(\alpha )}}^{(x)}(f(x)) = \frac{1}{{(q - 1)!}}I_{1,x_{l,n,i}^{(\alpha )}}^{(x)}\left( {{{\left( {x_{l,n,i}^{(\alpha )} - x} \right)}^{q - 1}}f(x)} \right).
\end{equation}
By Theorem \ref{sec:err:thm1},
\begin{equation}
	I_{1,x_{l,n,i}^{(\alpha )}}^{(x)}\left( {{{\left( {x_{l,n,i}^{(\alpha )} - x} \right)}^{q - 1}}f(x)} \right) =\; _l\mathbf{P}_{B,n,i}^{(1)}\,\bm{\bar f} + \frac{{1}}{{(n + 1)!\,K_{l,n + 1}^{(\alpha )}}}I_{1,x_{l,n,i}^{(\alpha )}}^{(x)}\left( {G_{l,n + 1}^{(\alpha )}(x)} \right)\,{\left[ {\frac{{{d^{n + 1}}}}{{d{x^{n + 1}}}}\left( {{{\left( {x_{l,n,i}^{(\alpha )} - x} \right)}^{q - 1}}f(x)} \right)} \right]_{x = \zeta _{l,n,i}^{(\alpha )}}},
\end{equation}
where,
\begin{equation}
	{\bm{\bar f}} = {\left( {x_{l,n,i}^{(\alpha )}{\bm{1}} - {\bm{x}}_{l,n}^{(\alpha )}} \right)_{(q - 1)}} \circ {\bm{f}},
\end{equation}
and ${\mathbf{1}} \in {\mathbb{R}^{n+1}}$, is the all-ones vector. Hence, Theorem \ref{sec:err:thm3} follows directly by noticing that,
\begin{equation}
	\frac{1}{{(q - 1)!}}{\,_l}\mathbf{P}_{B,n,i}^{(1)}\,\left( {{{\left( {x_{l,n,i}^{(\alpha )}{\bm{1}} - {\bm{x}}_{l,n}^{(\alpha )}} \right)}_{(q - 1)}} \circ {\bm{f}}} \right) = \frac{1}{{(q - 1)!}}\,\,\left( {\left( {x_{l,n,i}^{(\alpha )}{\bm{1}} - {\bm{x}}_{l,n}^{(\alpha )}} \right)_{(q - 1)}^T \circ\; _l\mathbf{P}_{B,n,i}^{(1)}} \right)\,{\bm{f}} =\; _l{\mathbf{P}}_{B,n,i}^{(q)}\,{\bm{f}}.
\end{equation}
\end{proof}
\begin{thm}\label{sec1:thm:krooma133}
Assume that $f(x) \in C^{n+1}[0, l]$, and
\begin{equation}
	\mathop {{\sup}}\limits_{0 < x < l} \left| {\frac{{{d^{n + 1}}}}{{d{x^{n + 1}}}}\left( {{{\left( {x_{l,n,i}^{(\alpha )} - x} \right)}^{q - 1}}f(x)} \right)} \right| \le A \in {\mathbb{R}^ + },
\end{equation}
for some numbers $n \in \mathbb{Z}_0^+$, and $q \in \mathbb{Z}^+$. Moreover, let $I_{q,x_{l,n,i}^{(\alpha )}}^{(x)}(f(x))$, be approximated by the $q$th-order BSGQ, for each integration node ${x_{l,n,i}^{(\alpha )}} \in \mathbb{S}_{l,n}^{(\alpha )},\,i = 0, \ldots ,n$. Then there exist some positive constants $D_1^{(\alpha)}$ and $D_2^{(\alpha)}$, independent of $n$ such that the truncation error of the $q$th-order BSGQ, $E_{q,l,n}^{(\alpha )}$, is bounded by the following inequalities:
\begin{equation}
	\left| {E_{q,l,n}^{(\alpha)}} \right| \le \frac{{A{2^{ - 2n - 1}}\Gamma \left( {\alpha  + 1} \right){x_i}{l^{n + 1}}\Gamma \left( {n + 2\alpha  + 1} \right)}}{{(q - 1)!}\,{\Gamma \left( {2\alpha  + 1} \right)\Gamma \left( {n + 2} \right)\Gamma \left( {n + \alpha  + 1} \right)}}\left( {\left\{ \begin{array}{l}
	1,\quad n \ge 0 \wedge \alpha  \ge 0,\\
	\frac{{\left( {n + 1} \right)!\Gamma \left( {2\alpha } \right)}}{{\Gamma \left( {n + 2\alpha  + 1} \right)}}\,\left| {\left( {\begin{array}{*{20}{c}}
{\frac{{n + 1}}{2} + \alpha  - 1}\\
{\frac{{n + 1}}{2}}
\end{array}} \right)} \right|,\quad \frac{{n + 1}}{2} \in \mathbb{Z}^+  \wedge  - \frac{1}{2} < \alpha  < 0
	\end{array} \right.} \right),
\end{equation}
\begin{equation}
	\left| {E_{q,l,n}^{(\alpha)}} \right| < \frac{{A{2^{ - 2n - 1}}\Gamma \left( \alpha  \right)\left| \alpha  \right|{\mkern 1mu} {x_i}{l^{n + 1}}}}{{(q - 1)!}\,{\sqrt {\left( {n + 1} \right)\left( {2\alpha  + n + 1} \right)} {\mkern 1mu} \Gamma \left( {n + \alpha  + 1} \right)}}\,\left| {\left( {\begin{array}{*{20}{c}}
{\frac{n}{2} + \alpha }\\
{\frac{n}{2}}
\end{array}} \right)} \right|,\quad \frac{n}{2} \in \mathbb{Z}_0^ + \wedge  - \frac{1}{2} < \alpha  < 0,
\end{equation}
\begin{subequations}\label{sec1:ineq:errorkimo14}
	\begin{empheq}[left={\left| {E_{q,l,n}^{(\alpha)}} \right| \le}\empheqlbrace]{align}
	&B_1^{(\alpha)}\frac{{{e^n}\,{l^{n + 1}}\,{x_i}}}{{{2^{2n + 1}}\,{n^{n + 3/2 - \alpha}}\,{(q - 1)!}}},\quad \alpha \ge 0 \wedge n \gg 1,\\
	&B_2^{(\alpha)}\frac{{{e^n}\,{l^{n + 1}}\,{x_i}}}{{{2^{2n + 1}}\,{n^{n + 3/2}}\,{(q - 1)!}}},\quad - \frac{1}{2} < \alpha < 0  \wedge n \gg 1,
	\end{empheq}
\end{subequations}
for all $i = 0, \ldots, n$, where $B_1^{(\alpha)} = A D_1^{(\alpha)}$, and $B_2^{(\alpha)} = B_1^{(\alpha)} D_2^{(\alpha)}$.
\end{thm}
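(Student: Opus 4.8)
The plan is to reduce Theorem~\ref{sec1:thm:krooma133} directly to Theorem~\ref{sec1:thm:krooma1} by exploiting the exact error representation \eqref{sec1:eq:errorkimohat3} furnished by Theorem~\ref{sec:err:thm3}. The crucial observation is that \eqref{sec1:eq:errorkimohat3} has precisely the same structure as the first-order error formula \eqref{sec1:eq:errorkimohat}, the only differences being the extra multiplicative factor $1/(q-1)!$ and the replacement of the pointwise derivative $f^{(n+1)}\!\left(\zeta_{l,n,i}^{(\alpha)}\right)$ by the modified quantity $\left[\frac{d^{n+1}}{dx^{n+1}}\left(\left(x_{l,n,i}^{(\alpha)}-x\right)^{q-1}f(x)\right)\right]_{x=\zeta_{l,n,i}^{(\alpha)}}$. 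Consequently, every estimate carried out in the proof of Theorem~\ref{sec1:thm:krooma1} on the remaining factor $\frac{1}{(n+1)!\,K_{l,n+1}^{(\alpha)}}I_{1,x_{l,n,i}^{(\alpha)}}^{(x)}\left(G_{l,n+1}^{(\alpha)}(x)\right)$ transfers verbatim.

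Concretely, I would first take absolute values in \eqref{sec1:eq:errorkimohat3} and invoke the hypothesis $\sup_{0<x<l}\left|\frac{d^{n+1}}{dx^{n+1}}\left(\left(x_{l,n,i}^{(\alpha)}-x\right)^{q-1}f(x)\right)\right| \le A$ to bound the modified derivative evaluated at $\zeta_{l,n,i}^{(\alpha)}$ by $A$, yielding
\[
\left|E_{q,l,n}^{(\alpha)}\right| \le \frac{A}{(q-1)!}\,\frac{1}{(n+1)!\,\left|K_{l,n+1}^{(\alpha)}\right|}\left|I_{1,x_{l,n,i}^{(\alpha)}}^{(x)}\left(G_{l,n+1}^{(\alpha)}(x)\right)\right|.
\]
The factor $\frac{A}{(n+1)!\,\left|K_{l,n+1}^{(\alpha)}\right|}\left|I_{1,x_{l,n,i}^{(\alpha)}}^{(x)}\left(G_{l,n+1}^{(\alpha)}(x)\right)\right|$ is identical to the quantity bounded in Theorem~\ref{sec1:thm:krooma1}; substituting its estimates -- which separate into the cases $\alpha \ge 0$ and $-1/2 < \alpha < 0$ according to the parity of $n+1$ (respectively $n$) -- and carrying through the extra factor $1/(q-1)!$ reproduces exactly the three families of bounds asserted for $E_{q,l,n}^{(\alpha)}$, including the asymptotic estimates \eqref{sec1:ineq:errorkimo14} with the same constants $B_1^{(\alpha)}$ and $B_2^{(\alpha)}$.

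The analytic heart of the argument -- and the only genuinely technical step -- lies in the estimation of $\frac{1}{(n+1)!\,K_{l,n+1}^{(\alpha)}}I_{1,x_{l,n,i}^{(\alpha)}}^{(x)}\left(G_{l,n+1}^{(\alpha)}(x)\right)$, which requires the explicit leading coefficient $K_{l,n+1}^{(\alpha)}$ of the shifted Gegenbauer polynomial together with sharp estimates for the integral of $G_{l,n+1}^{(\alpha)}$ over $\left[0,x_{l,n,i}^{(\alpha)}\right]$, and subsequently the Gamma-function asymptotics that produce the exponential decay in \eqref{sec1:ineq:errorkimo14}. This step, however, is precisely the content already established in the proof of Theorem~\ref{sec1:thm:krooma1} (following \cite[Theorems 4.1 \& 4.3]{Elgindy2016}); since the $q$-dependence enters \eqref{sec1:eq:errorkimohat3} only through the benign prefactor $1/(q-1)!$ and through the modified hypothesis itself, no new analysis is needed, and Theorem~\ref{sec1:thm:krooma133} follows.
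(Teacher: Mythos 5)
Your proposal is correct and follows essentially the same route as the paper, whose own proof simply states that the result ``follows easily'' from Theorems \ref{sec:err:thm1} and \ref{sec:err:thm3}: you take absolute values in the exact error representation \eqref{sec1:eq:errorkimohat3}, bound the modified $(n+1)$st derivative by $A$ using the hypothesis, and observe that the remaining factor is exactly the quantity already estimated in the first-order case, so the bounds carry over with the extra prefactor $1/(q-1)!$. Your write-up just makes explicit the reduction the paper leaves implicit.
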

\begin{proof}
The proof follows easily from Theorems \ref{sec:err:thm1} and \ref{sec:err:thm3}.
\end{proof}
The following is a direct corollary of Theorem \ref{sec:err:thm3}.
\begin{cor}
Let $f(x) \in C^{n + 1}[0, l]$, be interpolated by the shifted Gegenbauer polynomials at the SGG nodes, $x_{l,n,i}^{(\alpha )} \in \mathbb{S}_{l,n}^{(\alpha )},\,i = 0, \ldots ,n$, and suppose that,
\begin{equation}
	{\left\| {{f^{(k)}}} \right\|_{{L^\infty }[0,l]}} \le {A_{{\max}}} \in {\mathbb{R}^ + }\,\forall k = 0, \ldots ,n + 1.
\end{equation}
Then there exist some numbers $\zeta _{l,n,i}^{(\alpha )} = \zeta \left(x_{l,n,i}^{(\alpha )}\right) \in (0,l),\,i = 0, \ldots ,n$, such that,
\begin{align}
I_{q,x_{l,n,i}^{(\alpha )}}^{(x)}(f(x)) &=\, _l{\mathbf{P}}_{B,n,i}^{(q)}\;{\bm{f}} + E_{q,l,n}^{(\alpha )}\left( {x_{l,n,i}^{(\alpha )},\zeta _{l,n,i}^{(\alpha )}} \right),\label{sec:err:eq:squadki135}\\
%E_{1,l,n}^{(\alpha )}\left( {x_{l,n,i}^{(\alpha )},\zeta _{l,n,i}^{(\alpha )}} \right) = {\left( {\frac{l}{2}} \right)^{n + 1}}\frac{{{f^{(n + 1)}}\left(\zeta _{l,n,i}^{(\alpha )}\right)}}{{(n + 1)!{\mkern 1mu} K_{n + 1}^{(\alpha )}}}I_{1,x_{l,n,i}^{(\alpha )}}^{(x)}\left( {G_{l,n + 1}^{(\alpha )}(x)} \right),
\left| {E_{q,l,n}^{(\alpha )}\left( {x_{l,n,i}^{(\alpha )},\zeta _{l,n,i}^{(\alpha )}} \right)} \right| &\le \frac{{{2^{ - n}}{A_{\max }}\,{l^{1 + n}}{n_{\max,i }}\,x_{l,n,i}^{(\alpha )}\,\Gamma (1 + \alpha )\,\Gamma (1 + n + 2\alpha )}}{{(n + 1)!\,(q - 1)!\,\Gamma (1 + n + \alpha )\,\Gamma (1 + 2\alpha )}} \times \nonumber\\
&\left( {\left\{ \begin{array}{l}
1,\quad \alpha  \ge 0,\\
\frac{{2\,\left| \alpha  \right|\,\left| {\left( \begin{array}{c}
\frac{n}{2} + \alpha \\
\frac{n}{2}
\end{array} \right)} \right|\left( {1 + n} \right)!\,\Gamma (2\alpha )}}{{\sqrt {\left( {1 + n} \right)\left( {1 + n + 2\alpha } \right)} \,\Gamma (1 + n + 2\alpha )}},\quad \frac{n}{2} \in \mathbb{Z}_0^ +  \wedge  - \frac{1}{2} < \alpha  < 0,\\
\frac{{\,\left| {\left( \begin{array}{c}
\frac{{n - 1}}{2} + \alpha \\
\frac{{n + 1}}{2}
\end{array} \right)} \right|\,\left( {n + 1} \right)!\,\Gamma (2\alpha )}}{{\Gamma (1 + n + 2\alpha )}},\quad \frac{{n + 1}}{2} \in {\mathbb{Z}^ + } \wedge  - \frac{1}{2} < \alpha  < 0
\end{array} \right.} \right),\label{sec1:eq:errorkimohat356}
\end{align}
for some positive number $q \in \mathbb{Z}^+$, where,
\begin{equation}
{n_{\max ,i}} = \max \left\{ {(q - 1)!,\mathop {{\sup}}\limits_{0 \le k \le n + 1} \left( {{{(q - 1)}_{n - k + 1}}\,{{\left| {x_{l,n,i}^{(\alpha )} - \zeta _{l,n,i}^{(\alpha )}} \right|}^{q - n + k - 2}}} \right)} \right\},
\end{equation}
and,
\[(x)_n = x(x - 1)...(x - (n - 1)),\]
is the falling factorial.
\end{cor}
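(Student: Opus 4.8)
The plan is to read off the identity part of the corollary directly from Theorem \ref{sec:err:thm3} and to spend all the real effort on the bound \eqref{sec1:eq:errorkimohat356}. Equation \eqref{sec:err:eq:squadki135} is literally \eqref{sec:err:eq:squadki13}, so nothing new is needed there. For the estimate, I would start from the exact error representation \eqref{sec1:eq:errorkimohat3} and treat its two factors separately: the \emph{kernel factor} $\tfrac{1}{(n+1)!\,K_{l,n+1}^{(\alpha)}}I_{1,x_{l,n,i}^{(\alpha)}}^{(x)}\bigl(G_{l,n+1}^{(\alpha)}(x)\bigr)$ and the $(n+1)$th derivative $\bigl[\tfrac{d^{n+1}}{dx^{n+1}}\bigl((x_{l,n,i}^{(\alpha)}-x)^{q-1}f(x)\bigr)\bigr]_{x=\zeta_{l,n,i}^{(\alpha)}}$. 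The first factor is already handled by Theorem \ref{sec1:thm:krooma1}: since its bounds there are obtained by estimating this factor and then multiplying by $A$, omitting $A$ shows that the modulus of the kernel factor is at most $\tfrac{2^{-2n-1}\Gamma(\alpha+1)\,x_{l,n,i}^{(\alpha)}\,l^{n+1}\,\Gamma(n+2\alpha+1)}{\Gamma(2\alpha+1)\,\Gamma(n+2)\,\Gamma(n+\alpha+1)}$ times the relevant three-way case factor.

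Next I would expand the derivative factor by the general Leibniz rule,
\[
\frac{d^{n+1}}{dx^{n+1}}\Bigl((x_{l,n,i}^{(\alpha)}-x)^{q-1}f(x)\Bigr)=\sum_{k=0}^{n+1}\binom{n+1}{k}\Bigl[\frac{d^{n+1-k}}{dx^{n+1-k}}(x_{l,n,i}^{(\alpha)}-x)^{q-1}\Bigr]f^{(k)}(x),
\]
and compute the repeated derivative of the monomial factor in closed form, obtaining $\bigl|\tfrac{d^{n+1-k}}{dx^{n+1-k}}(x_{l,n,i}^{(\alpha)}-x)^{q-1}\bigr|=(q-1)_{n-k+1}\,|x_{l,n,i}^{(\alpha)}-x|^{q-n+k-2}$ in the falling-factorial notation of the statement, where the terms with $n+1-k>q-1$ drop out automatically because $(q-1)_{n-k+1}=0$ there. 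Evaluating at $x=\zeta_{l,n,i}^{(\alpha)}$ and using the hypothesis $\|f^{(k)}\|_{L^\infty[0,l]}\le A_{\max}$ for $k=0,\dots,n+1$, each summand is dominated by $A_{\max}\,n_{\max,i}$, since $n_{\max,i}$ is defined to be no smaller than the supremum over $k$ of $(q-1)_{n-k+1}\,|x_{l,n,i}^{(\alpha)}-\zeta_{l,n,i}^{(\alpha)}|^{q-n+k-2}$. Summing the binomial coefficients then gives $\bigl|\tfrac{d^{n+1}}{dx^{n+1}}\bigl((x_{l,n,i}^{(\alpha)}-x)^{q-1}f(x)\bigr)\bigr|_{x=\zeta_{l,n,i}^{(\alpha)}}\le A_{\max}\,n_{\max,i}\,2^{n+1}$.

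Finally I would multiply this estimate by the kernel-factor bound and by the Cauchy prefactor $1/(q-1)!$ that already sits in front of \eqref{sec1:eq:errorkimohat3}. The power $2^{n+1}$ combines with the $2^{-2n-1}$ of the kernel bound to produce $2^{-n}$, the factor $\Gamma(n+2)=(n+1)!$ is absorbed into the $(n+1)!$ in the denominator, and routine Gamma-function identities such as $\Gamma(\alpha+1)=\alpha\,\Gamma(\alpha)$ and $\Gamma(2\alpha+1)=2\alpha\,\Gamma(2\alpha)$ convert the three case factors of Theorem \ref{sec1:thm:krooma1} into exactly the three branches displayed in \eqref{sec1:eq:errorkimohat356}. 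The only genuinely delicate step is the Leibniz estimate: one must check that the $q$-dependent monomial derivatives are captured uniformly by the single quantity $n_{\max,i}$, so that the binomial sum collapses to the clean factor $2^{n+1}$; once that is in place, the rest is bookkeeping that inherits directly from Theorems \ref{sec:err:thm1}--\ref{sec1:thm:krooma1}.
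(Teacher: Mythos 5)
Your proposal is correct and follows essentially the same route as the paper: apply the general Leibniz rule to $\bigl((x_{l,n,i}^{(\alpha)}-x)^{q-1}f(x)\bigr)^{(n+1)}$, bound the monomial derivatives case-by-case so that each summand is dominated by $A_{\max}\,n_{\max,i}$, collapse the binomial sum to $2^{n+1}$, and combine with the kernel-factor estimate to reach \eqref{sec1:eq:errorkimohat356}. The only cosmetic difference is that the paper bounds the factor $\tfrac{1}{(n+1)!\,|K_{l,n+1}^{(\alpha)}|}\bigl|I_{1,x_{l,n,i}^{(\alpha)}}^{(x)}(G_{l,n+1}^{(\alpha)}(x))\bigr|$ by citing Lemma 4.1 of the referenced work directly, whereas you recover the identical bound by stripping the constant $A$ from Theorem \ref{sec1:thm:krooma1}.
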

\begin{proof}
To simplify the notation, let
\[\mu (x) = \frac{{{d^{n + 1}}}}{{d{x^{n + 1}}}}\left( {{{\left( {x_{l,n,i}^{(\alpha )} - x} \right)}^{q - 1}}f(x)} \right).\]
Then by the general Leibniz rule, we have
\[{\left( {\mu (x)\,f(x)} \right)^{(n + 1)}} = \sum\limits_{k = 0}^{n + 1} {\left( \begin{array}{c}
n + 1\\
k
\end{array} \right){\mu ^{(n - k + 1)}}(x)\,{f^{(k)}}(x)} .\]
Realizing that,
\[\left| {{\mu ^{(n - k + 1)}}(\zeta _{l,n,i}^{(\alpha )})} \right| \le \left\{ \begin{array}{l}
0,\quad n - k > q - 2,\\
(q - 1)!,\quad n - k = q - 2,\\
{(q - 1)_{n - k + 1}}\,{\left| {x_{l,n,i}^{(\alpha )} - \zeta _{l,n,i}^{(\alpha )}} \right|^{q - n + k - 2}},\quad n - k < q - 2.
\end{array} \right\},\]
we find that,
\begin{equation}
	\left|E_{q,l,n}^{(\alpha )}\left( {x_{l,n,i}^{(\alpha )},\zeta _{l,n,i}^{(\alpha )}} \right)\right| \le \frac{{{2^{n + 1}}\,{n_{\max ,i}}\,{A_{\max }}}}{{(q - 1)!{\mkern 1mu} (n + 1)!\,\left|K_{l,n + 1}^{(\alpha )}\right|}}\left|I_{1,x_{l,n,i}^{(\alpha )}}^{(x)}\left( {G_{l,n + 1}^{(\alpha )}(x)} \right)\right|.\label{sec1:eq:errorkimohat35}
\end{equation}
The corollary follows easily using \cite[Lemma 4.1]{Elgindy2016}.
\end{proof}
Using the above error analysis, we can straightforwardly determine the truncation error of the integral dynamical system equation \eqref{eq:intdynsyser1} as stated in the following theorem.
\begin{thm}
Let
\begin{equation}
	\psi (y,t) = \phi (0,t) - \phi (y,t) + u(0,t) - u(y,t)\, \forall (y,t) \in {D_{L,{t_f}}^2}.
\end{equation}
Suppose also that $\psi(y,t) \in C^{N_t + 1}[0, t_f]\, \forall y \in [0,L]$, and,
\begin{equation}
{\left\| {\frac{{{\partial ^k}\phi }}{{\partial {y^k}}}} \right\|_{{L^\infty }\left( {D_{L,{t_f}}^2} \right)}} \le {A_{\max }} \in {\mathbb{R}^ + }{\mkern 1mu} \forall k = 0, \ldots ,{N_y} + 1.
\end{equation}
Then there exist some numbers $\zeta _{L,N_y,i}^{(\alpha ),y} = \zeta \left(y_{L,N_y,i}^{(\alpha )}\right) \in (0,L), \zeta _{t_f,N_t,j}^{(\alpha ),t} = \zeta \left(t_{t_f,N_t,j}^{(\alpha )}\right) \in (0,t_f),\,i = 0, \ldots ,N_y; j = 0, \ldots ,N_t$, such that the BSGPM discretizes the integral dynamical system equation \eqref{eq:intdynsyser1} with a total truncation error, $E_{\text{total},i,j}$ at each SGG point, $\left( {y_{L,{N_y},i}^{(\alpha )},t_{{t_f},{N_t},j}^{(\alpha )}} \right)$, bounded by,
\begin{equation}
\left|{E_{{\text{total},i,j}}}\right| \le D^{(\alpha)} (\varepsilon_{1,j} + \varepsilon_{2,i}),
\end{equation}
where,
\begin{align}
\varepsilon_{1,j} &= \frac{{{4^{ - {N_t}}}\,{B_{max,j}}\,t_f^{{N_t} + 1}\,t_{{t_f},{N_t},j}^{(\alpha )}}}{{({N_t} + 1)!\Gamma \left( {{N_t} + \alpha  + 1} \right)}}\left( {\left\{ \begin{array}{l}
\Gamma \left( {{N_t} + 2\alpha  + 1} \right),\quad \alpha  \ge 0,\\
 - \frac{{2\alpha \left( {\begin{array}{*{20}{c}}
{\frac{{{N_t}}}{2} + \alpha }\\
{\frac{{{N_t}}}{2}}
\end{array}} \right)({N_t} + 1)!\Gamma \left( {2\alpha } \right)}}{{\sqrt {\left( {{N_t} + 1} \right)\left( {{N_t} + 2\alpha  + 1} \right)} }},\quad \frac{{{N_t}}}{2} \in \mathbb{Z}_0^ +  \wedge  - \frac{1}{2} < \alpha  < 0,\\
\left| {\left( {\begin{array}{*{20}{c}}
{\frac{{{N_t} - 1}}{2} + \alpha }\\
{\frac{{{N_t} + 1}}{2}}
\end{array}} \right)} \right|({N_t} + 1)!\Gamma \left( {2\alpha } \right),\quad \frac{{{N_t} + 1}}{2} \in  \mathbb{Z}^+ \wedge  - \frac{1}{2} < \alpha  < 0
\end{array} \right.} \right),\\
\varepsilon_{2,i} &= \frac{{{A_{max}}{n_{max,i}}{2^{1 - {N_y}}}y_{L,{N_y},i}^{(\alpha )}\,{L^{{N_y} + 1}}\,\Gamma \left( {{N_y} + 2\alpha  + 1} \right)}}{{({N_y} + 1)!\,\Gamma (1 + {N_y} + \alpha )}}\left( {\left\{ \begin{array}{l}
1,\quad \alpha  \ge 0,\\
\frac{{2\left| \alpha  \right|\left| {\left( {\begin{array}{*{20}{c}}
{\frac{{{N_y}}}{2} + \alpha }\\
{\frac{{{N_y}}}{2}}
\end{array}} \right)} \right|\left( {{N_y} + 1} \right)!\Gamma \left( {2\alpha } \right)}}{{\sqrt {\left( {{N_y} + 1} \right)\left( {{N_y} + 2\alpha  + 1} \right)} \Gamma \left( {{N_y} + 2\alpha  + 1} \right)}},\,\,\frac{{{N_y}}}{2} \in \mathbb{Z}_0^ +  \wedge  - \frac{1}{2} < \alpha  < 0,\\
\frac{{\left| {\left( {\begin{array}{*{20}{c}}
{\frac{{{N_y} - 1}}{2} + \alpha}\\
{\frac{{{N_y} + 1}}{2}}
\end{array}} \right)} \right|\left( {{N_y} + 1} \right)!\Gamma \left( {2\alpha } \right)}}{{\Gamma \left( {{N_y} + 2\alpha  + 1} \right)}},\,\,\frac{{{N_y} + 1}}{2} \in {\mathbb{Z}^ + } \wedge \frac{{ - 1}}{2} < \alpha  < 0
\end{array} \right.} \right),
\end{align}
\begin{align}
		{n_{\max ,i}} &= \max \left\{ {1,\sup \,\left| {y_{L,{N_y},i}^{(\alpha )} - \zeta _{L,{N_y},i}^{(\alpha ),y}} \right|} \right\},\\
		D^{(\alpha)} &= \frac{{\Gamma (1 + \alpha )}}{{2\,\Gamma (1 + 2\alpha )}},
\end{align}
assuming that,
\begin{equation}
	{\left\| {{{\left[ {\frac{{{\partial ^{{N_t} + 1}}}}{{\partial {t^{{N_t} + 1}}}}\psi (y,t)} \right]}_{t = \zeta _{{t_f},{N_t},j}^{(\alpha ),t}}}} \right\|_{{L^\infty }[0,L]}} \le {B_{\max,j }}\in \mathbb{R}^+\, \forall j.
\end{equation}
\end{thm}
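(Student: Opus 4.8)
The plan is to treat the two integral operators appearing in \eqref{eq:intdynsyser1} separately and then combine their quadrature truncation errors additively. Written with the abbreviation $\psi$, Eq. \eqref{eq:intdynsyser1} reads $I_{2,y}^{(y)}(\phi(y,t)) + I_{1,t}^{(t)}(\psi(y,t)) = f(y) - f(0)$. At the SGG grid point $\left(y_{L,N_y,i}^{(\alpha)}, t_{t_f,N_t,j}^{(\alpha)}\right)$ the BSGPM replaces the $y$-directed double integral $I_{2,y}^{(y)}(\phi)$ by the second-order BSGQ ${}_L\mathbf{P}_{B,N_y,i}^{(2)}$ acting on the nodal values of $\phi$, and the $t$-directed single integral $I_{1,t}^{(t)}(\psi)$ by the first-order BSGQ ${}_{t_f}\mathbf{P}_{B,N_t,j}^{(1)}$ acting on the nodal values of $\psi$, exactly as in \eqref{eq:discintdynsyser1}. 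Since the right-hand side $f(y)-f(0)$ is reproduced exactly at the nodes (its discrete image is $\mathbf{F}_i = f(y_{L,N_y,i}^{(\alpha)}) - f(0)$), the total truncation error is precisely the sum of the two individual quadrature truncation errors,
\[
E_{\text{total},i,j} = E_{2,L,N_y}^{(\alpha)}\!\left(y_{L,N_y,i}^{(\alpha)}, \zeta_{L,N_y,i}^{(\alpha),y}\right) + E_{1,t_f,N_t}^{(\alpha)}\!\left(t_{t_f,N_t,j}^{(\alpha)}, \zeta_{t_f,N_t,j}^{(\alpha),t}\right),
\]
so that $\left|E_{\text{total},i,j}\right| \le \left|E_{2,L,N_y}^{(\alpha)}\right| + \left|E_{1,t_f,N_t}^{(\alpha)}\right|$ by the triangle inequality.

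For the $y$-contribution I would apply the Corollary to Theorem~\ref{sec:err:thm3} with $q=2$, $n=N_y$, $l=L$, to the univariate function $y \mapsto \phi\!\left(y, t_{t_f,N_t,j}^{(\alpha)}\right)$, whose derivatives up to order $N_y+1$ are bounded by $A_{\max}$ uniformly in $t$ by hypothesis; this is exactly the regularity demanded by that Corollary. With $q=2$ the falling-factorial quantity $n_{\max,i}$ collapses to $\max\{1,\ \sup|y_{L,N_y,i}^{(\alpha)} - \zeta_{L,N_y,i}^{(\alpha),y}|\}$, matching the stated $n_{\max,i}$, and the three-case brace of the Corollary reproduces the three $\alpha$-regimes in $\varepsilon_{2,i}$. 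For the $t$-contribution, because the factor $(t_{t_f,N_t,j}^{(\alpha)} - t)^{q-1}$ is trivial when $q=1$, I would instead invoke the sharper first-order bound of Theorem~\ref{sec1:thm:krooma1} with $n=N_t$, $l=t_f$, and $A = B_{\max,j}$, the latter being the postulated $L^\infty[0,L]$ bound on $\left[\frac{\partial^{N_t+1}}{\partial t^{N_t+1}}\psi\right]_{t=\zeta_{t_f,N_t,j}^{(\alpha),t}}$; its three inequalities furnish the three cases of $\varepsilon_{1,j}$, and no $n_{\max}$ factor survives since the general Leibniz expansion degenerates for $q=1$.

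The decisive bookkeeping step is to verify that the common constant $D^{(\alpha)} = \Gamma(1+\alpha)/\bigl(2\,\Gamma(1+2\alpha)\bigr)$ can be factored out of both bounds simultaneously. This is where the two distinct source theorems must be reconciled: the Corollary carries a prefactor $2^{-N_y}$ whereas Theorem~\ref{sec1:thm:krooma1} carries $2^{-2N_t-1}$, and after extracting $D^{(\alpha)}$ these become the $2^{1-N_y}$ and $4^{-N_t}$ powers appearing in $\varepsilon_{2,i}$ and $\varepsilon_{1,j}$, respectively. I expect the main obstacle to be precisely this gamma-function algebra carried out consistently across all three regimes $\alpha\ge 0$, $\tfrac{N}{2}\in\mathbb{Z}_0^+\wedge -\tfrac12<\alpha<0$, and $\tfrac{N+1}{2}\in\mathbb{Z}^+\wedge -\tfrac12<\alpha<0$: one must use $2\alpha\,\Gamma(2\alpha)=\Gamma(1+2\alpha)$ together with $\alpha\,\Gamma(\alpha)=\Gamma(1+\alpha)$ to absorb the $\Gamma(\alpha)|\alpha|$ and $2\alpha\,\Gamma(2\alpha)$ factors of Theorem~\ref{sec1:thm:krooma1} into $D^{(\alpha)}$ and the surviving binomial coefficients, and likewise to confirm that the $\Gamma(1+\alpha)/\Gamma(1+2\alpha)$ already present in the Corollary detaches cleanly as $D^{(\alpha)}$. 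Once this identification is established in each regime, summing the two factored bounds immediately yields $\left|E_{\text{total},i,j}\right| \le D^{(\alpha)}\left(\varepsilon_{1,j} + \varepsilon_{2,i}\right)$, which completes the proof.
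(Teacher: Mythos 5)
Your proposal is correct and follows essentially the same route as the paper: the paper's proof likewise writes $E_{\text{total},i,j}$ as the sum of the first-order BSGQ error in $t$ applied to $\psi$ (Theorem \ref{sec:err:thm1}/\ref{sec1:thm:krooma1} with $A=B_{\max,j}$) and the corollary's $q=2$ bound in $y$ applied to $\phi$, then invokes the bounds on $\left|I_{1,x}^{(x)}\bigl(G_{l,n+1}^{(\alpha)}(x)\bigr)\right|$ and $\left|K_{l,n+1}^{(\alpha)}\right|$ to extract $D^{(\alpha)}$. Your bookkeeping of $n_{\max,i}$ collapsing for $q=2$ and of the $2^{1-N_y}$ versus $4^{-N_t}$ prefactors matches what the paper leaves implicit in its ``from which the theorem follows.''
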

\begin{proof}
A straightforward error analysis shows that,
\begin{align}
\left| {{E_{{\text{total}},i,j}}} \right| \le \frac{1}{{({N_t} + 1)!\,\left| {K_{{t_f},{N_t} + 1}^{(\alpha )}} \right|}}\left| {I_{1,t_{{t_f},{N_t},j}^{(\alpha )}}^{(t)}\left( {G_{{t_f},{N_t} + 1}^{(\alpha )}(t)} \right){\mkern 1mu} {{\left[ {\frac{{{\partial ^{{N_t} + 1}}}}{{\partial {t^{{N_t} + 1}}}}\psi (y,t)} \right]}_{t = \zeta _{{t_f},{N_t},j}^{(\alpha ),t}}}} \right|\\
 + \frac{{{2^{{N_y} + 1}}{\mkern 1mu} {n_{\max ,i}}{\mkern 1mu} {A_{\max }}}}{{({N_y} + 1)!{\mkern 1mu} \left| {K_{L,{N_y} + 1}^{(\alpha )}} \right|}}\left| {I_{1,y_{L,{N_y},i}^{(\alpha )}}^{(y)}\left( {G_{L,{N_y} + 1}^{(\alpha )}(y)} \right)} \right|,
\end{align}
from which the theorem follows.
\end{proof}
\section{Numerical Example}
\label{sec:NEAA1}
In this section, we report the results of the developed BSGPM on the optimal control problem under study with $L = 4, t_f = 1, r_1 = r_2 = 1/2; f(y) = 1 + y$. The BSGPM was applied using $N_y = N_t = N = 4, 5, \ldots, 12$, and $\alpha = -0.4, -0.3, \ldots, 0.9$. The nonlinear programming problem was solved using MATLAB ``fmincon'' constrained optimization solver with the default ``TolFun'' and ``TolCon'' of $10^{-6}$ . The numerical experiments were conducted on a personal laptop equipped with an Intel(R) Core(TM) i7-2670QM CPU with 2.20GHz speed running on a Windows 10 64-bit operating system and provided with MATLAB R2014b (8.4.0.150421) Software. Figure \ref{fig:semilogscale1} shows the plots of the approximate optimal cost functional $J_{N,N}$, the feasibility\footnote{By the feasibility of the solution, we mean the maximum constraint violation.} of the optimal solution $\bm{Z}^*$ as reported by the solver, the maximum error in the initial condition \eqref{eq:init1}, ${\psi}_1$, at the $101$ linearly spaced nodes in the $y$- and $t$-directions from $0$ to $4$, and $0$ to $1$, respectively depicted in semi-logarithmic scale, and the maximum error in the boundary condition \eqref{eq:bound3}, ${\psi}_2$. As can be observed from the figure, the approximate optimal cost functional $J_{N,N}^*$ is approximately $15$ for all input data with feasibility and $\psi_2$ near the machine epsilon. We observe also that discretizations at the SGG points for non-positive $\alpha$-values yield the minimum $\psi_1$-values for small values of $N$, whereas the accuracy degrades for increasing values of $\alpha$-- a result that is consistent with the work of \cite{Elgindy2016} on second-order one-dimensional hyperbolic telegraph equations. We expect also to obtain the optimal approximations in the maximum norm for large values of $N$ through discretizations at the shifted Chebyshev-Gauss points as discussed earlier in Section \ref{sec:err}. Fortunately, the present numerical scheme converges exponentially fast for sufficiently smooth solutions using relatively small number of grids. A sketch of the calculated state and control profiles using $N = 12$ and $\alpha = -0.2$ is shown in Figure \ref{fig:fig1}. Figure \ref{fig:fig2} shows also their profiles at the midpoint $y = 2$. In comparison with \cite{Rad2014} who solved the optimal control problem using $120$ nodal points in both directions, the BSGPM exhibits exponential convergence rates and produces excellent approximations using as small as $5$ nodes in both directions.
\begin{figure}[ht]
\centering
\includegraphics[scale=0.85]{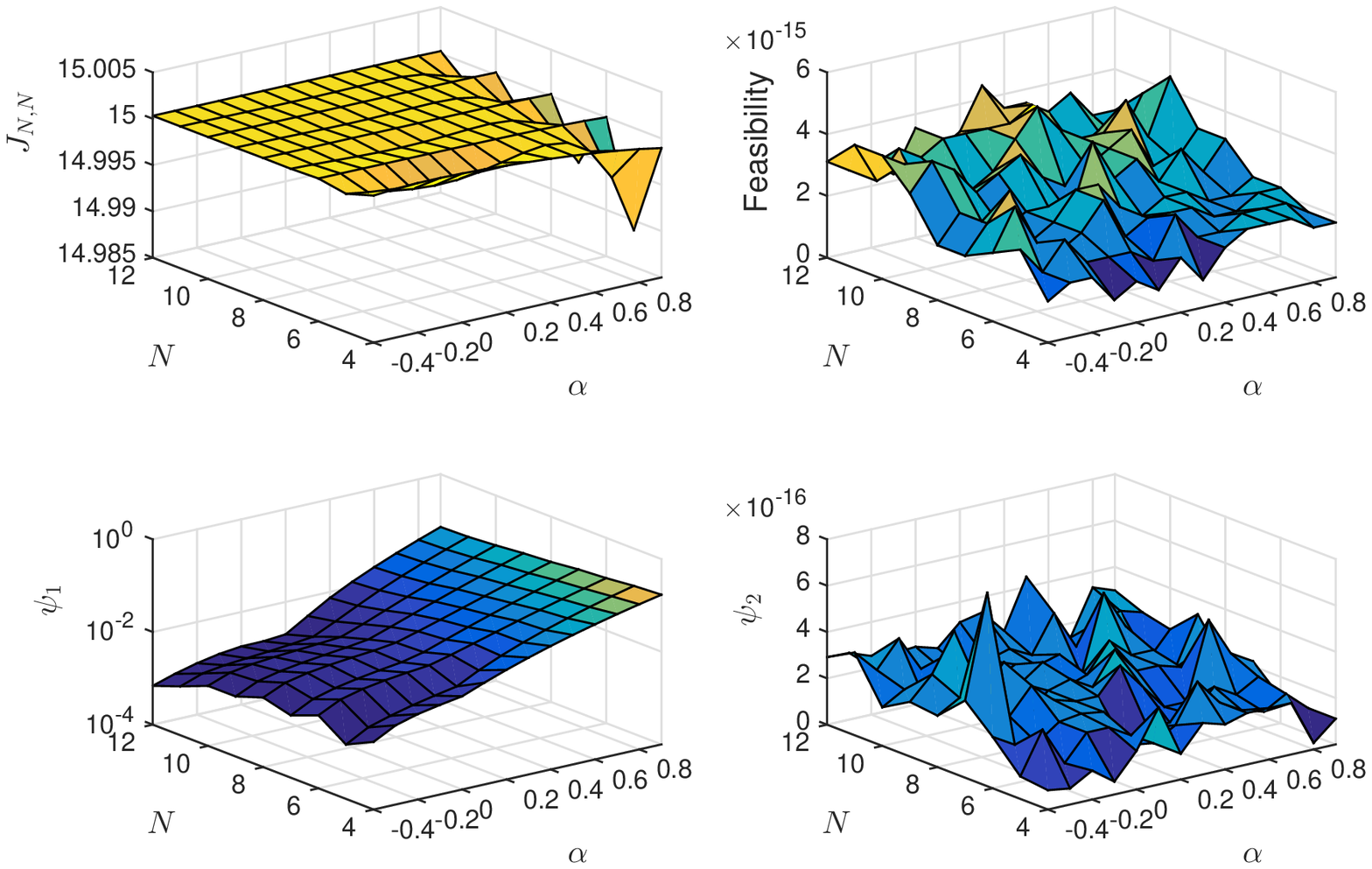}
\caption{The figure shows the plots of the approximate optimal cost functional $J_{N,N}^*$ (upper left), the feasibility of the optimal solution $\bm{Z}^*$ as reported by the solver (upper right), the maximum error in the initial condition \eqref{eq:init1}, ${\psi}_1$, at the $101$ linearly spaced nodes $(x_i, y_i)$ in the $y$- and $t$-directions from $0$ to $4$, and $0$ to $1$, respectively in semi-logarithmic scale (lower left), and the maximum error in the boundary condition \eqref{eq:bound3}, ${\psi}_2$ (lower right). All of the plots were generated using the same $101$ points $(x_i, y_i)$.}
\label{fig:semilogscale1}
\end{figure}
\begin{figure}[ht]
\centering
\includegraphics[scale=1]{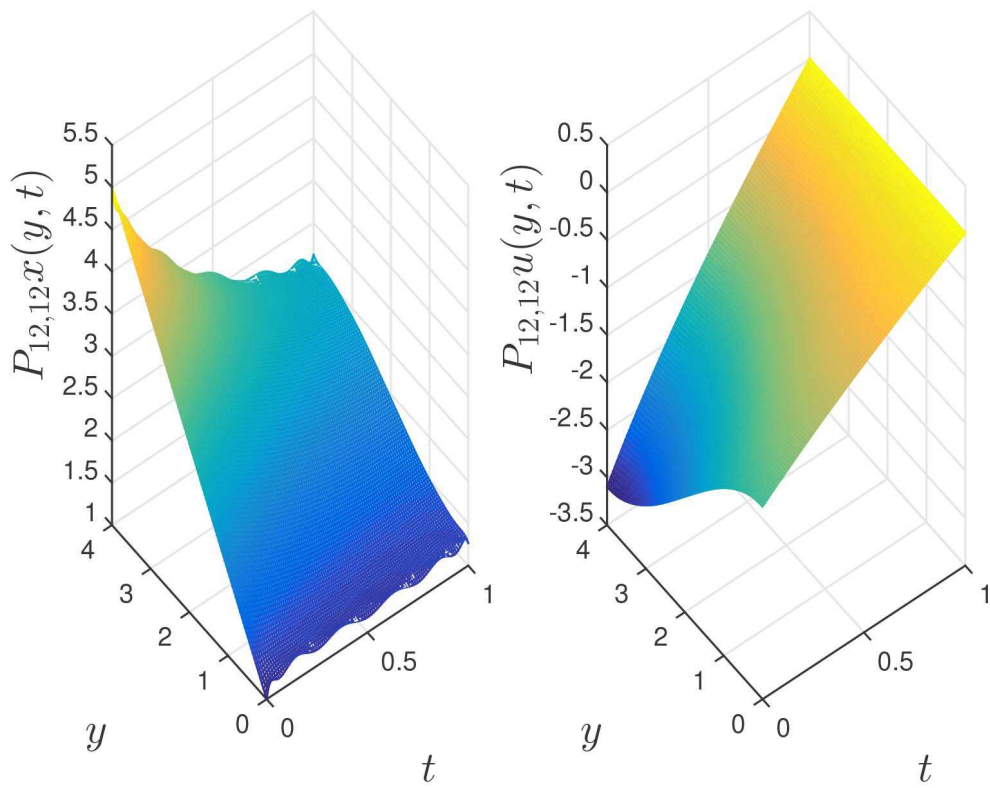}
\caption{The figure shows the state and control profiles on $D_{4,1}^2$ using $N = 12$ and $\alpha = -0.2$.}
\label{fig:fig1}
\end{figure}
\begin{figure}[ht]
\centering
\includegraphics[scale=1]{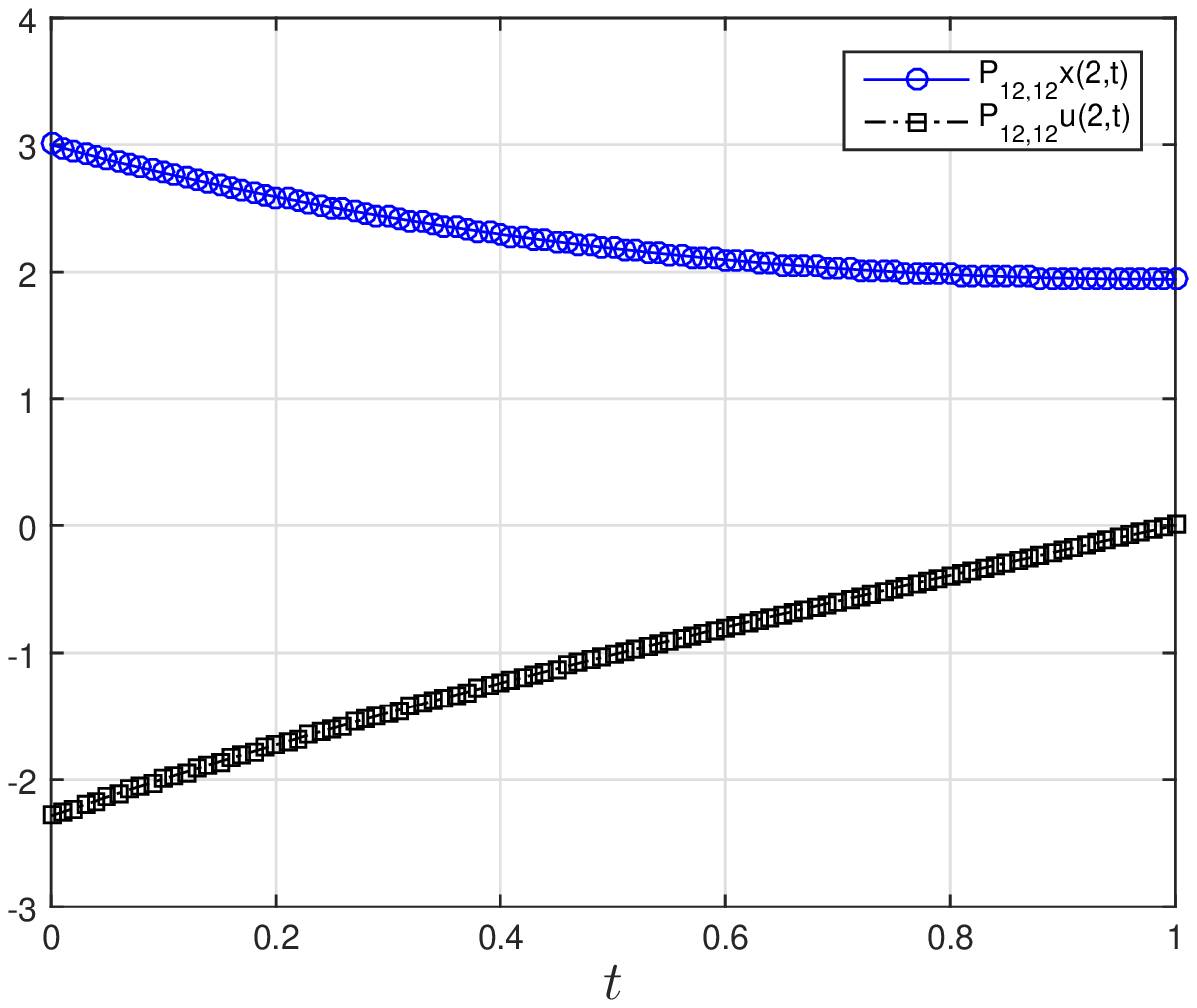}
\caption{The figure shows the state and control profiles at the midpoint $y = 2$ using $N = 12$ and $\alpha = -0.2$.}
\label{fig:fig2}
\end{figure}
\section{Conclusion}
\label{conc}
This paper presented a robust and computationally efficient BSGPM for solving a PDE-governed optimal control problem. A key reason underlying the computationally
streamlined nature of the current approach lies in the accurate discretization of the system dynamics and constraints into a well-conditioned algebraic linear system using stable and high-order BSGQs. Using a practical test example, it is shown that the BSGPM has two significant advantages over the method of \cite{Rad2014}: (i) the method converges exponentially fast, and (ii) the required number of collocation/nodal points to produce high-order approximations is significantly smaller. The test example also suggests that discretizations at the SGG points for non-positive $\alpha$-values yield better approximations for relatively small numbers of expansion terms, whereas the accuracy degrades for increasing values of $\alpha$. The present method provides a strong addition to the arsenal of numerical pseudospectral methods, and can be extended to solve a wide range of PDE-governed optimal control problems arising in numerous applications.
%%
%% ---------------------------------------------
%% References
%%
%%%\bibliographystyle{model1-num-names}
%\bibliographystyle{elsarticle-harv}
%\bibliography{Bib}

%% ---------------------------------------------
%%
\end{document}